\definecolor{darkgreen}{rgb}{0,0.45,0}
\def\@cite#1#2{[{#1\if@tempswa ,~#2\fi}]}% NEW
\DeclareMathAlphabet{\mathbf}{OT1}{cmr}{b}{n}
\def\matrixobject@{%
  \edef \next@{={\DirectionfromtheDirection@ }}%
  \expandafter \toks@ \next@ \plainxy@
  \let\xy@@ix@=\xyq@@toksix@
  \xyFN@ \OBJECT@}
\let\xy@entry@@norm=\entry@@norm
\def\entry@@norm@patched{%
  \let\object@=\matrixobject@
  \xy@entry@@norm }
\newcommand{\twocong}[2][0.5]{\ar@{}[#2] \save ?(#1)*{\cong}\restore}
\newcommand{\twoeq}[2][0.5]{\ar@{}[#2] \save ?(#1)*{=}\restore}
\newcommand{\rtwocell}[3][0.5]{\ar@{}[#2] \ar@{=>}?(#1)+/l 0.2cm/;?(#1)+/r 0.2cm/^{#3}}
\newcommand{\ltwocell}[3][0.5]{\ar@{}[#2] \ar@{=>}?(#1)+/r 0.2cm/;?(#1)+/l 0.2cm/^{#3}}
\newcommand{\ltwocello}[3][0.5]{\ar@{}[#2] \ar@{=>}?(#1)+/r 0.2cm/;?(#1)+/l 0.2cm/_{#3}}
\newcommand{\dtwocell}[3][0.5]{\ar@{}[#2] \ar@{=>}?(#1)+/u  0.2cm/;?(#1)+/d 0.2cm/^{#3}}
\newcommand{\dltwocell}[3][0.5]{\ar@{}[#2] \ar@{=>}?(#1)+/ur  0.2cm/;?(#1)+/dl 0.2cm/^{#3}}
\newcommand{\drtwocell}[3][0.5]{\ar@{}[#2] \ar@{=>}?(#1)+/ul  0.2cm/;?(#1)+/dr 0.2cm/^{#3}}
\newcommand{\dthreecell}[3][0.5]{\ar@{}[#2] \ar@3{->}?(#1)+/u  0.2cm/;?(#1)+/d 0.2cm/^{#3}}
\newcommand{\utwocell}[3][0.5]{\ar@{}[#2] \ar@{=>}?(#1)+/d 0.2cm/;?(#1)+/u 0.2cm/_{#3}}
\newcommand{\dtwocelltarg}[3][0.5]{\ar@{}#2 \ar@{=>}?(#1)+/u  0.2cm/;?(#1)+/d 0.2cm/^{#3}}
\newcommand{\utwocelltarg}[3][0.5]{\ar@{}#2 \ar@{=>}?(#1)+/d  0.2cm/;?(#1)+/u 0.2cm/_{#3}}
\newcommand{\pullbackcorner}[1][dr]{\save*!/#1-1.2pc/#1:(-1,1)@^{|-}\restore}
\newcommand{\cat}[1]{\mathrm{\mathcal #1}}
\newcommand{\thg}{{\mathord{\text{--}}}}
\newcommand{\spn}[1]{{\langle{#1}\rangle}}
\newcommand{\defeq}{\mathrel{\mathop:}=}
\newcommand{\cd}[2][]{\vcenter{\hbox{\xymatrix#1{#2}}}}
\renewcommand{\phi}{\varphi}
\newcommand{\C}{{\mathcal C}}
\newcommand{\E}{{\mathcal E}}
\newcommand{\G}{{\mathcal G}}
\renewcommand{\H}{{\mathcal H}}
\newcommand{\J}{{\mathcal J}}
\newcommand{\Z}{{\mathcal Z}}
\newcommand{\xtor}[1]{\cdl[@1]{{} \ar[r]|-{\object@{|}}^{#1} & {}}}
\newcommand{\tor}{\ensuremath{\relbar\joinrel\mapstochar\joinrel\rightarrow}}
\def\hookleftarrowfill@{\arrowfill@\leftarrow\relbar{\relbar\joinrel\rhook}}
\def\twoheadleftarrowfill@{\arrowfill@\twoheadleftarrow\relbar\relbar}
\def\leftbararrowfill@{\arrowdoublefill@{\leftarrow\mkern-5mu}\relbar\mapstochar\relbar\relbar}
\def\Leftbararrowfill@{\arrowdoublefill@{\Leftarrow\mkern-2mu}\Relbar\Mapstochar\Relbar\Relbar}
\def\leftringarrowfill@{\arrowdoublefill@{\leftarrow\mkern-3mu}\relbar{\mkern-3mu\circ\mkern-2mu}\relbar\relbar}
\def\lefttriarrowfill@{\arrowfill@{\mathrel\triangleleft\mkern0.5mu\joinrel\relbar}\relbar\relbar}
\def\Lefttriarrowfill@{\arrowfill@{\mathrel\triangleleft\mkern1mu\joinrel\Relbar}\Relbar\Relbar}
\def\hookrightarrowfill@{\arrowfill@{\lhook\joinrel\relbar}\relbar\rightarrow}
\def\twoheadrightarrowfill@{\arrowfill@\relbar\relbar\twoheadrightarrow}
\def\rightbararrowfill@{\arrowdoublefill@{\relbar\mkern-0.5mu}\relbar\mapstochar\relbar\rightarrow}
\def\Rightbararrowfill@{\arrowdoublefill@{\Relbar\mkern-2mu}\Relbar\Mapstochar\Relbar\Rightarrow}
\def\rightringarrowfill@{\arrowdoublefill@\relbar\relbar{\mkern-2mu\circ\mkern-3mu}\relbar{\mkern-3mu\rightarrow}}
\def\righttriarrowfill@{\arrowfill@\relbar\relbar{\relbar\joinrel\mkern0.5mu\mathrel\triangleright}}
\def\Righttriarrowfill@{\arrowfill@\Relbar\Relbar{\Relbar\joinrel\mkern1mu\mathrel\triangleright}}
\def\leftrightarrowfill@{\arrowfill@\leftarrow\relbar\rightarrow}
\def\mapstofill@{\arrowfill@{\mapstochar\relbar}\relbar\rightarrow}
\renewcommand*\xleftarrow[2][]{\ext@arrow 20{20}0\leftarrowfill@{#1}{#2}}
\providecommand*\xLeftarrow[2][]{\ext@arrow 60{22}0{\Leftarrowfill@}{#1}{#2}}
\providecommand*\xhookleftarrow[2][]{\ext@arrow 10{20}0\hookleftarrowfill@{#1}{#2}}
\providecommand*\xtwoheadleftarrow[2][]{\ext@arrow 60{20}0\twoheadleftarrowfill@{#1}{#2}}
\providecommand*\xleftbararrow[2][]{\ext@arrow 10{22}0\leftbararrowfill@{#1}{#2}}
\providecommand*\xLeftbararrow[2][]{\ext@arrow 50{24}0\Leftbararrowfill@{#1}{#2}}
\providecommand*\xleftringarrow[2][]{\ext@arrow 10{26}0\leftringarrowfill@{#1}{#2}}
\providecommand*\xlefttriarrow[2][]{\ext@arrow 80{24}0\lefttriarrowfill@{#1}{#2}}
\providecommand*\xLefttriarrow[2][]{\ext@arrow 80{24}0\Lefttriarrowfill@{#1}{#2}}
\renewcommand*\xrightarrow[2][]{\ext@arrow 01{20}0\rightarrowfill@{#1}{#2}}
\providecommand*\xRightarrow[2][]{\ext@arrow 04{22}0{\Rightarrowfill@}{#1}{#2}}
\providecommand*\xhookrightarrow[2][]{\ext@arrow 00{20}0\hookrightarrowfill@{#1}{#2}}
\providecommand*\xtwoheadrightarrow[2][]{\ext@arrow 03{20}0\twoheadrightarrowfill@{#1}{#2}}
\providecommand*\xrightbararrow[2][]{\ext@arrow 01{22}0\rightbararrowfill@{#1}{#2}}
\providecommand*\xRightbararrow[2][]{\ext@arrow 04{24}0\Rightbararrowfill@{#1}{#2}}
\providecommand*\xrightringarrow[2][]{\ext@arrow 01{26}0\rightringarrowfill@{#1}{#2}}
\providecommand*\xrighttriarrow[2][]{\ext@arrow 07{24}0\righttriarrowfill@{#1}{#2}}
\providecommand*\xRighttriarrow[2][]{\ext@arrow 07{24}0\Righttriarrowfill@{#1}{#2}}
\providecommand*\xmapsto[2][]{\ext@arrow 01{20}0\mapstofill@{#1}{#2}}
\providecommand*\xleftrightarrow[2][]{\ext@arrow 10{22}0\leftrightarrowfill@{#1}{#2}}
\providecommand*\xLeftrightarrow[2][]{\ext@arrow 10{27}0{\Leftrightarrowfill@}{#1}{#2}}
\numberwithin{equation}{section}
\theoremstyle{plain}
\newtheorem{Thm}{Theorem}
\newtheorem*{Thm*}{Theorem}
\newtheorem{Prop}[Thm]{Proposition}
\newtheorem{Lemma}[Thm]{Lemma}
\theoremstyle{definition}
\newtheorem{Defn}[Thm]{Definition}
\newtheorem{Ex}[Thm]{Example}
\begin{document}
\leftmargini=2em 
\title{Inner automorphisms of groupoids}
\author{Richard Garner} 
\address{Department of Mathematics, Macquarie University, NSW 2109, Australia} 
\email{richard.garner@mq.edu.au}

\subjclass[2000]{Primary: }
\date{\today}

\thanks{The support of Australian Research Council grants
  DP160101519 and FT160100393 is gratefully acknowledged.}

\begin{abstract}
  Bergman has given the following abstract characterisation of the
  inner automorphisms of a group $G$: they are exactly those
  automorphisms of $G$ which can be extended functorially along any
  homomorphism $G \rightarrow H$ to an automorphism of $H$. This leads
  naturally to a definition of ``inner automorphism'' applicable to
  the objects of any category. Bergman and Hofstra--Parker--Scott have
  computed these inner automorphisms for various structures including
  $k$-algebras, monoids, lattices, unital rings, and
  quandles---showing that, in each case, they are given by an obvious
  notion of conjugation.

  In this note, we compute the inner automorphisms of groupoids,
  showing that they are exactly the automorphisms induced by
  conjugation by a bisection. The twist is that this result is
  \emph{false} in the category of groupoids and homomorphisms; to make
  it true, we must instead work with the less familiar category of
  groupoids and \emph{comorphisms} in the sense of Higgins and
  Mackenzie. Besides our main result, we also discuss generalisations
  to topological and Lie groupoids, to categories and to partial
  automorphisms, and examine the link with the theory of inverse
  semigroups.
\end{abstract}
\maketitle

\section{Background}

In~\cite{Bergman2012An-inner}, Bergman gave an element-free
characterisation of the inner automorphisms of a group $G$: they are
exactly those automorphisms of $G$ which can be extended in a
functorial way along any group homomorphism $f \colon G \rightarrow H$
to an automorphism of $H$. More precisely, Bergman defines an
\emph{extended inner automorphism} $\beta$ of $G$ to be a family of group
automorphisms $(\beta_f \colon H \rightarrow H)$, one for each
homomorphism $f \colon G \rightarrow H$, with the property that the
square
\begin{equation}\label{eq:16}
  \cd{
    {H} \ar[r]^-{\beta_f} \ar[d]_{g} &
    {H} \ar[d]^{g} \\
    {K} \ar[r]^-{\beta_{gf}} &
    {K}
  }
\end{equation}
commutes for all $f \colon G \rightarrow H$ and
$g \colon H \rightarrow K$. It is easy to see that each $a \in G$
gives rise to an extended inner automorphism by taking
\begin{equation}\label{eq:2}
  \beta_f(x) = f(a)xf(a)^{-1} \quad \text{for all} \quad f \colon G \rightarrow H\rlap{ .}
\end{equation}

Less obvious is the fact, proved in~\cite{Bergman2012An-inner}, that
every extended inner automorphism of $G$ is of this form for a
\emph{unique} $a \in G$; in particular, an automorphism of $G$ is
inner just when it is the component $\beta_{1_G}$ of some extended
inner automorphism. In light of this characterisation, it is natural
for Bergman to define:
\begin{Defn}[\cite{Bergman2012An-inner}]
  \label{def:4}
  An \emph{extended inner automorphism} of an object $G$ of a category
  $\C$ is a family of automorphisms
  $(\beta_f \colon H \rightarrow H)$, one for each map
  $f \colon G \rightarrow H$ in $\C$, which render commutative the
  square~\eqref{eq:16} for each $f \colon G \rightarrow H$ and $g
  \colon H \rightarrow K$.
\end{Defn}
Bergman goes on to characterise the extended inner automorphisms in
the category of $k$-algebras over a field $k$, as well as the extended
inner \emph{endomorphisms} (dropping the requirement of invertibility
of the maps $\beta_f$) for groups and $k$-algebras, along with further
variants such as ``inner derivations''.

In~\cite{Hofstra2018Isotropy}, the calculation of extended inner
automorphism groups is pursued further, with these being worked out in
full for the categories of monoids, abelian groups, lattices, unital
rings, racks, and quandles; in each case, the extended inner
automorphisms capture a natural notion of \emph{conjugation}. Besides
these calculations, \cite{Hofstra2018Isotropy} also draws the
connection between Bergman's extended inner automorphisms and the
\emph{isotropy group}~\cite{Funk2012Isotropy} of a topos $\E$; this is
the universal group object in $\E$ which acts naturally on every
$X \in \E$. The link is that, in the case of a presheaf topos
$\E = [\C, \cat{Set}]$, the isotropy group of $\E$ is the functor
$\Z \colon \C \rightarrow \cat{Grp}$ sending each $X \in \C$ to its
group of extended inner automorphisms.

In this note we show that, in the context of \emph{groupoids},
extended inner automorphisms once again capture a natural notion of
conjugation: not now by a single element, but rather by a suitable
family of elements of a kind which is well-known from the study of Lie
and topological groupoids.
\begin{Defn}
  \label{def:3}
  A \emph{bisection} $\alpha$ of a groupoid $\mathbb{G}$ is an
  $\mathbb{G}_0$-indexed family of morphisms
  $(\alpha_u \colon u \rightarrow \tilde\alpha(u))$ for which the
  function
  $\tilde{\alpha} \colon \mathbb{G}_0 \rightarrow \mathbb{G}_0$ is
  a bijection.
\end{Defn}

(Here, and subsequently, we write $\mathbb{G}_0$ and $\mathbb{G}_1$ for
the sets of objects and morphisms of a groupoid.)
Each bisection induces a conjugation automorphism
$c_\alpha \colon \mathbb{G} \rightarrow \mathbb{G}$ with action on
objects $\tilde{\alpha}$ and action on morphisms
\begin{equation}
  \label{eq:3}
  x \colon u \rightarrow v \qquad \mapsto \qquad \alpha_v \circ x
  \circ \alpha_u^{-1} \colon \tilde{\alpha}(u) \rightarrow
  \tilde{\alpha}(v)\rlap{ ;}
\end{equation}
however, the characterisation of these as the extended inner
automorphisms of groupoids is slightly delicate.
% as we will see in Section~\ref{sec:inner-autom-with},
It is \emph{not} true that
$c_\alpha \colon \mathbb{G} \rightarrow \mathbb{G}$ is the
$1_\mathbb{G}$-component of an extended inner automorphism of
$\mathbb{G}$ in the category $\cat{Grpd}$ of small groupoids and
homomorphisms. There is an intuitively clear explanation for this
fact: given a homomorphism
$f \colon \mathbb{G} \rightarrow \mathbb{H}$, we should like to define
$\beta_f = c_{f\alpha}$, as in the case of groups, but there is no
obvious way of defining the pushforward $f\alpha$ of the bisection
$\alpha$ along $f$. While this does not rule out the possibility that
there is some \emph{non}-obvious way of defining the pushfoward, we
find that, in fact:
\begin{Prop}
  \label{prop:1}
  There are no non-trivial extended inner automorphisms in
  $\cat{Grpd}$.
\end{Prop}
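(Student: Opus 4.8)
The plan is to show directly that \emph{every} component $\beta_f\colon\mathbb{H}\to\mathbb{H}$ of an extended inner automorphism $\beta$ of an arbitrary groupoid $\mathbb{G}$ is the identity, treating the action on objects first and the action on morphisms second. The only inputs are: the naturality squares~\eqref{eq:16}; the fact that each $\beta_f$ is an \emph{automorphism} (hence bijective on objects and preserving identity morphisms); and the following ``whiskering'' device. Given a groupoid $\mathbb{L}$ and an object $p$, let $\mathbb{L}'$ be obtained by freely adjoining a new object $z$ and a new isomorphism $w\colon p\to z$, with inclusion $j\colon\mathbb{L}\hookrightarrow\mathbb{L}'$. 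Then $j$ is injective on objects and morphisms, $\mathrm{Aut}_{\mathbb{L}'}(p)=\mathrm{Aut}_{\mathbb{L}}(p)$ and $\mathrm{Hom}_{\mathbb{L}'}(p,z)=w\cdot\mathrm{Aut}_{\mathbb{L}}(p)$, and, by the universal property of the free adjunction, for any object $p'$ of $\mathbb{L}$ and any isomorphism $s\colon p\to p'$ there is a retraction $r\colon\mathbb{L}'\to\mathbb{L}$ with $rj=1_{\mathbb{L}}$, $r(z)=p'$, $r(w)=s$. Two instances of naturality recur: for $f=h\colon\mathbb{G}\to\mathbb{L}$ and $g=j$, square~\eqref{eq:16} gives $\beta_{jh}\circ j=j\circ\beta_h$, so $\beta_{jh}$ agrees with $\beta_h$ on $\mathbb{L}$; and for $f=jh$ and $g=r$ with $rj=1_{\mathbb{L}}$ it gives $\beta_h\circ r=r\circ\beta_{jh}$.

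\emph{Objects.} First I would prove that for every $h\colon\mathbb{G}\to\mathbb{L}$ the automorphism $\beta_h$ fixes all objects of $\mathbb{L}$. If instead $\beta_h$ sent some $p$ to $p'\neq p$, I would whisker at $p$; then $\beta_{jh}$ also sends $p$ to $p'$, and applying the retraction with $r(z)=p$, $r(w)=1_p$ and evaluating object-maps at $z$ gives $p'=\beta_h(r(z))=r(\beta_{jh}(z))$. Since the object-map of $r$ is injective away from $\{p,z\}$ and $p'\notin\{p,z\}$, this forces $\beta_{jh}(z)=p'=\beta_{jh}(p)$, contradicting injectivity of the object-map of the automorphism $\beta_{jh}$.

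\emph{Morphisms.} Granting the object part, fix $h$ and a morphism $x\colon u\to v$ of $\mathbb{L}$, so that $\beta_h(x)\colon u\to v$. I would whisker \emph{at $v$}, getting $j\colon\mathbb{L}\hookrightarrow\mathbb{L}'$ with fresh isomorphism $w\colon v\to z$. As $\beta_{jh}$ fixes $v$ and $z$, we have $\beta_{jh}(w)=w\alpha$ for some $\alpha\in\mathrm{Aut}_{\mathbb{L}}(v)$; the retraction with $r(z)=v$, $r(w)=1_v$ then yields $1_v=\beta_h(1_v)=r(\beta_{jh}(w))=\alpha$, so $\beta_{jh}(w)=w$ and hence $\beta_{jh}(w\circ x)=w\circ\beta_h(x)$. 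Now applying the \emph{other} retraction, with $r(z)=u$ and $r(w)=x^{-1}$, and evaluating at $w\circ x$ gives $1_u=\beta_h(r(w\circ x))=r(\beta_{jh}(w\circ x))=r(w)\circ\beta_h(x)=x^{-1}\circ\beta_h(x)$, whence $\beta_h(x)=x$. Thus every $\beta_h$ is the identity, so every extended inner automorphism of every groupoid is the trivial family; this is Proposition~\ref{prop:1}.

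I expect the morphism step to be the real obstacle, and inside it the case of \emph{loops} ($u=v$): one cannot argue by mapping $\mathbb{G}$ into a quotient in which $x$ becomes an identity, because collapsing a non-identity loop to an identity quotients a vertex group and so is not faithful. The role of whiskering first is precisely to replace the loop $x$ by the composite $w\circ x$, a morphism between the \emph{distinct} objects $u$ and $z$, which can then be sent to an identity by a retraction of the harmless whiskered groupoid $\mathbb{L}'$ without disturbing $\mathbb{L}$ itself; the only loose end is to pin down $\beta_{jh}(w)$, which is exactly what the first, auxiliary retraction accomplishes.
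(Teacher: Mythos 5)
Your proof is correct, and it shares the paper's two-stage skeleton -- first force every component to be the identity on objects by probing with a freely adjoined object, then force it to be the identity on morphisms by probing with a freely adjoined isomorphism -- but the implementation is genuinely different. The paper adjoins its free cells to the \emph{domain}, forming the coproducts $\mathbb{G}+1$ and $\mathbb{G}+\J$ (with $\J$ the groupoid with two objects and one isomorphism between them), and evaluates $\beta_f$ at an arbitrary object $x$ or arrow $a$ of $\mathbb{H}$ via the classifying homomorphism $\spn{f,x}$ or $\spn{f,a}$ out of the coproduct; one naturality square per stage then gives $\beta_f(x)=x$ and $\beta_f(a)=a$, the only preparation being that $\beta_\iota$ fixes $\star$ (it preserves the full subcategory $\mathbb{G}$ and is bijective on objects) and that $\beta_\jmath$ fixes the generic isomorphism (it is identity on objects and that isomorphism is the unique map $\upsilon\to\nu$). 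You instead glue the free cell onto the \emph{codomain} $\mathbb{L}$ at a chosen basepoint and collapse it with retractions; your unrolling of a loop $x\colon u\to u$ into $w\circ x\colon u\to z$ is the same device that makes $\mathbb{G}+\J$ work in the paper, where the generic isomorphism likewise lives between distinct objects and is only specialised to a loop by the classifying map. The cost of your route is that you must compute the hom-sets of the amalgamated groupoid $\mathbb{L}'$ -- that $\mathbb{L}$ sits in it as a full subcategory and that $\mathrm{Hom}_{\mathbb{L}'}(v,z)=w\cdot\mathrm{Aut}_{\mathbb{L}}(v)$ -- whereas the disjoint coproduct needs no such analysis, and your morphism step uses two retractions where the paper uses one square. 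What it buys is that the probe is built directly on the receiving groupoid, so no universal classifying object is ever named, and the object step becomes a clean injectivity contradiction.
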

\begin{proof}
  Let $\beta$ be an extended inner automorphism of the small groupoid
  $\mathbb{G}$. Consider the coproduct $\mathbb{G} + 1$ of
  $\mathbb{G}$ with the terminal groupoid, and
  $\iota \colon \mathbb{G} \rightarrow \mathbb{G} + 1$ the coproduct
  injection. By~\eqref{eq:16}, we have
  $\beta_\iota \circ \iota = \iota \circ \beta_{1_\mathbb{G}}$, and so
  the automorphism
  $\beta_\iota \colon \mathbb{G} + 1 \rightarrow \mathbb{G} + 1$ must
  map the full subcategory $\mathbb{G}$ of $\mathbb{G} + 1$ into
  itself; thus, to be bijective on objects, it must map the remaining
  object $\star$ of $\mathbb{G} + 1$ to itself.

  Now for any homomorphism $f \colon \mathbb{G} \rightarrow
  \mathbb{H}$ and any $x \in \mathbb{H}$, there is a (unique)
  homomorphism $\spn{f,x} \colon \mathbb{G} + 1 \rightarrow \H$ such
  that $\spn{f,x}\circ \iota = f$ and $\spn{f,x}(\star) = x$. The
  first condition implies using~\eqref{eq:16} that $\beta_f \circ \spn{f,x} = \spn{f,x}
  \circ \beta_\iota$, whence
  \begin{equation}\label{eq:4}
    \beta_f(x) = \beta_f(\spn{f,x}(\star)) =
    \spn{f,x}(\beta_\iota(\star)) = \spn{f,x}(\star) = x
  \end{equation}
  so that each $\beta_f$ is the identity on objects.

  Consider now the groupoid $\mathbb{G} + \J$ and coproduct injection
  $\jmath \colon \mathbb{G} \rightarrow \mathbb{G} + \J$, where here
  $\J$ is the groupoid with two objects and a single isomorphism
  between them. Writing $\alpha \colon \upsilon \rightarrow \nu$ for
  the image of this isomorphism in $\mathbb{G} + \J$, we conclude from
  the fact that
  $\beta_\jmath \colon \mathbb{G} + \J \rightarrow \mathbb{G} + \J$ is
  the identity on objects that $\beta_\jmath(\alpha) = \alpha$.

  Now for any homomorphism
  $f \colon \mathbb{G} \rightarrow \mathbb{H}$ and any arrow
  $a \colon u \rightarrow v \in \mathbb{H}$, there is a (unique)
  homomorphism $\spn{f,a} \colon \mathbb{G} + \J \rightarrow \H$ with
   $\spn{f,a}\circ \iota = f$ and $\spn{f,x}(\alpha) = a$.
  Repeating the calculation~\eqref{eq:4}, \emph{mutatis mutandis}, we
  conclude that $\beta_f(a) = a$ so that each $\beta_f$ is also the
  identity on morphisms.
\end{proof}

Despite this negative result, we \emph{can} exhibit the conjugation
automorphisms $c_\alpha \colon \mathbb{G} \rightarrow \mathbb{G}$ as
the values of extended inner automorphisms: to do so, we need to alter
the kind of morphism that we consider between groupoids. Rather than
homomorphisms of groupoids, we must consider the \emph{comorphisms} of
Higgins and Mackenzie~\cite{Higgins1993Duality}. A comorphism
$f \colon \mathbb{G} \rightsquigarrow \mathbb{H}$ between groupoids
comprises a function $f \colon \mathbb{H}_0 \rightarrow \mathbb{G}_0$
together with the assignation to each $a \colon fu \rightarrow v$ in
$\mathbb{G}$ of a map $f(a)_u \colon u \rightarrow \tilde f(a,u)$ in
$\mathbb{H}$, subject to suitable axioms. As explained
in~\cite{Aguiar1997Internal}, bisections \emph{do} transport along
comorphisms; this rectifies the problem we observed earlier, allowing
us to prove that:

\begin{Thm*}
  The extended inner automorphisms of an object $\mathbb{G}$ of the
  category of small groupoids and comorphisms are in bijection with
  the bisections of $\mathbb{G}$. The extended inner automorphism
  corresponding to a bisection $\alpha$ is given by the family of
  congjuation automorphisms
  $(c_{f\alpha} \mid f \colon \mathbb{G} \rightsquigarrow
  \mathbb{H})$.
\end{Thm*}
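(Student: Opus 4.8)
The plan is to set up the category $\cat{Grpd}^{\mathrm{co}}$ of small groupoids and comorphisms carefully enough that the two required facts become checkable: first, that a bisection $\alpha$ of $\mathbb{G}$ transports along any comorphism $f \colon \mathbb{G} \rightsquigarrow \mathbb{H}$ to a bisection $f\alpha$ of $\mathbb{H}$, in a way that is functorial in $f$ (so $(gf)\alpha = g(f\alpha)$ and $1_{\mathbb{G}}\alpha = \alpha$); and second, that every extended inner automorphism arises this way. For the transport, following \cite{Aguiar1997Internal}, given $f$ with underlying object-map $f_0 \colon \mathbb{H}_0 \to \mathbb{G}_0$ and the assignation $a \mapsto f(a)_u$, I would define $(f\alpha)_u = f(\alpha_{f_0 u})_u \colon u \rightarrow \widetilde{f(\alpha_{f_0 u})}(u)$; one then checks, using the comorphism axioms, that $u \mapsto \widetilde{f\alpha}(u)$ is a bijection of $\mathbb{H}_0$ — its inverse being built from the transport of $\alpha^{-1}$ — so that $f\alpha$ is genuinely a bisection. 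Functoriality in $f$ is a diagram chase in the comorphism axioms. Granting this, I would set $\beta^{\alpha}_f \defeq c_{f\alpha} \colon \mathbb{H} \to \mathbb{H}$ for each $f \colon \mathbb{G} \rightsquigarrow \mathbb{H}$, and verify that \eqref{eq:16} commutes: for $g \colon \mathbb{H} \rightsquigarrow \mathbb{K}$ one needs $g \circ c_{f\alpha} = c_{(gf)\alpha} \circ g = c_{g(f\alpha)}\circ g$, i.e.\ that conjugating by $g(f\alpha)$ after applying $g$ equals applying $g$ after conjugating by $f\alpha$. Writing $\mathbb{H}$ for $\mathbb{G}$ and $f\alpha$ for $\alpha$, this is the single identity $g \circ c_{\alpha} = c_{g\alpha} \circ g$ for a comorphism $g$ and a bisection $\alpha$, which I would isolate as a lemma and prove directly from the definition \eqref{eq:3} of $c_\alpha$ and the action of comorphisms on arrows. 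This establishes the map from bisections to extended inner automorphisms, and its injectivity is immediate since the $1_{\mathbb{G}}$-component recovers $c_\alpha$, hence $\tilde\alpha$ and the $\alpha_u$ up to the evident data.

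The substantive half is surjectivity: given an extended inner automorphism $\beta = (\beta_f)$, produce a bisection $\alpha$ with $\beta_f = c_{f\alpha}$ for all $f$. The strategy mirrors the proof of Proposition~\ref{prop:1}, but now run in the comorphism category, where coproducts and the test-objects $\mathbb{G}+1$, $\mathbb{G}+\J$ behave differently. I would first pin down the $1_{\mathbb{G}}$-component $\gamma \defeq \beta_{1_{\mathbb{G}}} \colon \mathbb{G} \to \mathbb{G}$: probing with comorphisms out of $\mathbb{G}$ into enlargements of $\mathbb{G}$ — one should use the comorphisms classifying ``adjoin a fresh arrow'' — and invoking naturality \eqref{eq:16}, one extracts for each object $u$ an arrow $\alpha_u \colon u \rightarrow \gamma(u)$ (the image of a chosen generating arrow under the relevant $\beta$), and shows the assignment $u \mapsto \alpha_u$ is forced to be a bisection because $\gamma$ is an isomorphism of groupoids on objects. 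Then one checks $\gamma = c_\alpha$ on arrows, again by naturality against comorphisms that isolate a single arrow. Finally, to get $\beta_f = c_{f\alpha}$ for an arbitrary $f \colon \mathbb{G} \rightsquigarrow \mathbb{H}$ and not merely at $1_{\mathbb{G}}$, I would use that $f$ itself can be slotted into \eqref{eq:16} against identities and against the probing comorphisms on the $\mathbb{H}$-side, reducing $\beta_f$ to $c_{f\alpha}$ by the transport-functoriality already established.

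The main obstacle, and the place where I expect the real work to sit, is the bookkeeping of comorphisms themselves: unlike homomorphisms, a comorphism points its object-map ``backwards'' and its arrow-assignation is only defined on arrows out of the image of the object-map, so the universal properties of $\mathbb{G}+1$ and $\mathbb{G}+\J$ in $\cat{Grpd}^{\mathrm{co}}$ — and indeed whether these coproducts compute as in $\cat{Grpd}$ at all — must be checked from scratch, and the ``probing'' maps used for surjectivity have to be genuine comorphisms whose naturality squares say exactly what is needed. Concretely, the hard lemma is the identity $g \circ c_\alpha = c_{g\alpha} \circ g$ together with the verification that the transported family $g\alpha$ really is a bisection; once those are in hand, everything else is a matter of choosing the right test objects and transcribing the argument of Proposition~\ref{prop:1}. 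I would also keep an eye on coherence: the bijection must be a group isomorphism between $\Z(\mathbb{G})$ and the group of bisections under the evident composition, so I would record that $f(\alpha\beta) = (f\alpha)(f\beta)$ and that $c_\alpha c_\beta = c_{\alpha\beta}$, making the correspondence a natural isomorphism of functors $\cat{Grpd}^{\mathrm{co}} \to \cat{Grp}$.
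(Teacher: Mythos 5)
The first half of your plan (bisections transport along comorphisms, and $g \circ c_\alpha = c_{g\alpha}\circ g$) is sound and matches the paper's Proposition~\ref{prop:2} in substance, though the paper makes the verification tractable by first factoring an arbitrary comorphism as $(f_2)_\ast(f_1)^\ast$ with $f_1$ a discrete opfibration and $f_2$ bijective on objects (Proposition~\ref{prop:5}), and checking the naturality square separately for each of the two generating classes. You will want that factorisation anyway, so it is worth isolating.

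The genuine gap is in the surjectivity half. You propose to ``mirror the proof of Proposition~\ref{prop:1}'' by probing $\beta$ with maps into enlargements $\mathbb{G}+1$ and $\mathbb{G}+\J$ that classify a fresh object or arrow. But those probes are homomorphisms $\mathbb{G}\rightarrow\mathbb{G}+\J$, not comorphisms $\mathbb{G}\rightsquigarrow\mathbb{G}+\J$: a comorphism out of $\mathbb{G}$ requires an object-map $(\mathbb{G}+\J)_0\rightarrow\mathbb{G}_0$ together with liftings of \emph{all} arrows of $\mathbb{G}$ out of each image object, and there is no ``adjoin a fresh arrow'' comorphism in this sense (disjoint union is not even the coproduct in $\cat{Grpd}_\mathrm{co}$, as you half-suspect). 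Indeed, the failure of exactly these probes is why Proposition~\ref{prop:1} is a \emph{negative} result; transcribing its argument is the wrong move. The construction your proposal is missing is the paper's key idea: for each $u\in\mathbb{G}_0$ the codomain projection $\pi_u\colon u/\mathbb{G}\rightarrow\mathbb{G}$ from the coslice groupoid is a discrete opfibration, hence yields a comorphism $\pi_u^\ast\colon\mathbb{G}\rightsquigarrow u/\mathbb{G}$, and one sets $\alpha_u \defeq \beta_{\pi_u^\ast}(1_u)$. Naturality against $\pi_u$ then identifies $\tilde\alpha$ with the object-action of $\beta_{1_\mathbb{G}}$, and naturality against the precomposition functors $({\thg})\circ a\colon v/\mathbb{G}\rightarrow u/\mathbb{G}$ forces $\beta_{1_\mathbb{G}}=c_\alpha$ on arrows. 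Even after that, your closing step (``slot $f$ into the naturality square against identities'') is not enough to get $\beta_f=c_{f\alpha}$ for general $f$: the paper needs the factorisation of $f$, the Beck--Chevalley lemma (Lemma~\ref{lem:2}) comparing $q_\ast p^\ast$ with $\pi_{hu}^\ast h_\ast$ for a pullback of coslices, and a three-step induction. Finally, note that you also need Lemma~\ref{lem:4} (every invertible comorphism is $g_\ast$ for an invertible functor $g$) merely to interpret the components $\beta_f$ as automorphisms of groupoids to which conjugation can be compared.
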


This is our main result, and will be proven in
Section~\ref{sec:inner-autom-group} below. Preceding this is
Section~\ref{sec:comorph-bisect}, which sets up the necessary
background on~\cite{Higgins1993Duality}'s notion of comorphism, the
relation to the usual groupoid homomorphisms, and the link with
bisections. Finally, after proving our main result, we describe in
Section~\ref{sec:gener-comm} various natural generalisations---to
topological and Lie groupoids, and to categories---and discuss an
alternative perspective involving inverse semigroups.

\section{Comorphisms and bisections}
\label{sec:comorph-bisect}

The definition of comorphism we give here is not the original one
of~\cite{Higgins1993Duality}, but a reformulation due
to~\cite{Aguiar1997Internal}.
\begin{Defn}
  \label{def:1}
  A \emph{comorphism} (sometimes also called \emph{cofunctor})
  $f \colon \mathbb{G} \rightsquigarrow \mathbb{H}$ between
  small groupoids comprises a function
  $f \colon \mathbb{H}_0 \rightarrow \mathbb{G}_0$
  together with functions
  \begin{align*}
    \textstyle\sum_{v \in \mathbb{G}} \mathbb{G}(fu, v) &
    \quad \rightarrow \quad
    \textstyle \sum_{v' \in \mathbb{H}} \mathbb{H}(u, v') \\
    fu \xrightarrow{a} v & \quad \mapsto \quad u \xrightarrow{f(a)_u}
    \tilde f(a,u)
  \end{align*}
  for each $u \in \mathbb{H}_0$, subject to the following axioms:
  \begin{enumerate}[(i)]
  \item $f\bigl(\tilde{f}(a,u)\bigr) = v$ for all $a \colon fu \rightarrow v$ in $\mathbb{G}$;
  \item $f(1_{fu})_u = 1_u$ for all $u \in \mathbb{H}_0$;
  \item $f(b)_{\tilde f(a,u)} \circ f(a)_u = f(ba)_u$ for all $a \colon fu
    \rightarrow v$ and $b \colon v \rightarrow w$ in $\mathbb{G}$.
  \end{enumerate}
  Comorphisms may be composed in the evident manner, and in this way
  we obtain a category
  $\cat{Grpd}_\mathrm{co}$ of small groupoids and comorphisms.
\end{Defn}

There are two ways in which an ordinary homomorphism of
groupoids can give rise to a comorphism. On the one hand, any
bijective-on-objects homomorphism
$f \colon \mathbb{G} \rightarrow \mathbb{H}$ induces a comorphism
$f_\ast \colon \mathbb{G} \rightsquigarrow \mathbb{H}$ which on
objects acts as the inverse to
$f \colon \mathbb{G}_0 \rightarrow \mathbb{H}_0$, and on maps is
given by the assignation
\begin{equation*}
  f_\ast(u) \xrightarrow a v \quad \mapsto
  \quad 
  u \xrightarrow{f(a)} f(v)\rlap{ .}
\end{equation*}

On the other hand, we can obtain a comorphism from any \emph{discrete
  opfibration}. Recall that a homomorphism of groupoids
$f \colon \mathbb{G} \rightarrow \mathbb{H}$ is a discrete opfibration
if, for each $u \in \mathbb{H}_0$ and map
$a \colon fu \rightarrow v$ in $\mathbb{G}$, there is a unique map
$f(a)_u \colon u \rightarrow \tilde f(a,u)$ in $\mathbb{H}$ whose domain is
$u$ and whose image under $f$ is $a$. In this situation, the action on
objects and unique liftings of arrows for
$f \colon \mathbb{G} \rightarrow \mathbb{H}$ provide the data of a
comorphism $f^\ast \colon \mathbb{H} \rightsquigarrow \mathbb{G}$. 

In fact, as explained in~\cite[Section~4.4]{Aguiar1997Internal}, all
comorphisms are generated from those in the image of $(\thg)_\ast$ and
$(\thg)^\ast$:
\begin{Prop}
  \label{prop:5}
  Any comorphism $f \colon \mathbb{G} \rightsquigarrow \mathbb{H}$ can
  be decomposed as
  \begin{equation}\label{eq:6}
    \cd{
      f = \mathbb{G} \ar@{~>}[r]^-{(f_1)^\ast} & \mathbb{K}
      \ar@{~>}[r]^-{(f_2)_\ast} & \mathbb{H}\rlap{ .}
    }
  \end{equation}
\end{Prop}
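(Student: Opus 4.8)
The plan is to build the intermediate groupoid $\mathbb{K}$ out of the comorphism data itself, in such a way that $f_1 \colon \mathbb{K} \to \mathbb{G}$ is the obvious discrete opfibration and $f_2 \colon \mathbb{K} \to \mathbb{H}$ the obvious bijective-on-objects homomorphism. Concretely: let $\mathbb{K}_0 = \mathbb{H}_0$, and let $\mathbb{K}(u, u')$ be the set of those morphisms $a \colon fu \rightarrow fu'$ of $\mathbb{G}$ for which $\tilde f(a, u) = u'$, with composition and identities inherited from $\mathbb{G}$. (Equivalently, a morphism of $\mathbb{K}$ out of $u$ is a pair $(u,a)$ with $a \colon fu \rightarrow v$ in $\mathbb{G}$, regarded as an arrow $u \rightarrow \tilde f(a,u)$.) Then define $f_1$ to be $f$ on objects and the inclusion $\mathbb{K}(u,u') \hookrightarrow \mathbb{G}(fu, fu')$ on hom-sets, and $f_2$ to be the identity on objects and the assignment $a \mapsto f(a)_u$ on each $\mathbb{K}(u,u')$.

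First I would check that $\mathbb{K}$ is a well-defined groupoid and that $f_1, f_2$ are well-defined functors. Everything here comes down to the three comorphism axioms together with the two derived identities $\tilde f(ba, u) = \tilde f(b, \tilde f(a,u))$ and $\tilde f(1_{fu}, u) = u$: the first is obtained by comparing codomains of the two sides of axiom~(iii), the second is immediate from axiom~(ii). Using these, $\mathbb{K}(u,u')$ is closed under the composition inherited from $\mathbb{G}$ (if $\tilde f(a,u) = u'$ and $\tilde f(b,u') = u''$ then $\tilde f(ba,u) = u''$), contains $1_{fu}$ when $u' = u$, and is closed under inverses (since $\tilde f(a^{-1}, \tilde f(a,u)) = \tilde f(1_{fu},u) = u$), while associativity, units and the inverse laws are inherited from $\mathbb{G}$; functoriality of $f_1$ is trivial, and functoriality of $f_2$ is exactly axioms~(ii) and~(iii). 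Moreover $f_1$ is a discrete opfibration, since for $u \in \mathbb{K}_0$ and $a \colon f_1u = fu \rightarrow v$ in $\mathbb{G}$ the unique lift of $a$ with domain $u$ is $a$ itself, viewed as an arrow $u \rightarrow \tilde f(a,u)$ of $\mathbb{K}$; and $f_2$ is bijective on objects by construction. Hence $(f_1)^\ast \colon \mathbb{G} \rightsquigarrow \mathbb{K}$ and $(f_2)_\ast \colon \mathbb{K} \rightsquigarrow \mathbb{H}$ are defined.

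Finally I would verify that $(f_2)_\ast \circ (f_1)^\ast = f$ by unwinding both sides. On objects the composite sends $u \in \mathbb{H}_0$ first to $u \in \mathbb{K}_0$ (the object-part of $(f_2)_\ast$ being the inverse of the identity-on-objects map $f_2$) and then to $f_1(u) = fu \in \mathbb{G}_0$, as required. On arrows, given $u \in \mathbb{H}_0$ and $a \colon fu \rightarrow v$ in $\mathbb{G}$, the definition of composition of comorphisms first lifts $a$ through $(f_1)^\ast$ at $u$ to the arrow of $\mathbb{K}$ named by $a$, namely $u \rightarrow \tilde f(a,u)$, and then transports this through $(f_2)_\ast$ at $u$ to $f_2(a) = f(a)_u \colon u \rightarrow \tilde f(a,u)$ in $\mathbb{H}$ — which is precisely the arrow-assignment (and codomain-assignment) prescribed by $f$.

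I expect the one genuinely non-mechanical part to be spotting the correct definition of $\mathbb{K}$, and in particular that the target of the arrow named by $a \in \mathbb{G}(fu, -)$ must be $\tilde f(a,u)$; once that is in hand, everything reduces to a routine unwinding of the axioms, the fussiest point being that composition in $\mathbb{K}$ is well-typed, which is just axiom~(iii) read off on codomains. A secondary thing to keep straight is the directionality of the conventions in the excerpt: the discrete opfibration $f_1 \colon \mathbb{K} \to \mathbb{G}$ yields a comorphism pointing $\mathbb{G} \rightsquigarrow \mathbb{K}$, while the bijective-on-objects homomorphism $f_2 \colon \mathbb{K} \to \mathbb{H}$ yields one pointing $\mathbb{K} \rightsquigarrow \mathbb{H}$, so that the two do compose to a comorphism $\mathbb{G} \rightsquigarrow \mathbb{H}$ as wanted.
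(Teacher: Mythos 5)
Your proposal is correct and is essentially the paper's own proof: the paper constructs the same intermediate groupoid $\mathbb{K}$ (objects those of $\mathbb{H}$, morphisms $u \tor u'$ the maps $a \colon fu \rightarrow fu'$ of $\mathbb{G}$ with $\tilde f(a,u) = u'$), the same functors $f_1$ and $f_2$, and verifies the same points. No substantive differences.
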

\begin{proof}
  Let $\mathbb{K}$ be the groupoid whose objects are those of
  $\mathbb{H}$ and whose maps $u \tor v$ are maps
  $a \colon fu \rightarrow fv$ of $\mathbb{G}$ such that $\tilde f(a,u) = v$.
  Composition is inherited from $\mathbb{G}$, and is well-defined by
  axioms (ii) and (iii) for a comorphism. There is an
  identity-on-objects homomorphism
  $f_2 \colon \mathbb{K} \rightarrow \mathbb{H}$ given on maps by
  $f_2(a \colon u \tor v) = f(a)_u \colon u \rightarrow v$; and so we
  can form $(f_2)_\ast \colon \mathbb{K} \rightsquigarrow \mathbb{H}$.
  There is also a homomorphism
  $f_1 \colon \mathbb{K} \rightarrow \mathbb{G}$ with the same action
  as $f$ on objects, and action on morphisms
  $f_1(a \colon u \tor v) = a \colon fu \rightarrow fv$. Note that,
  for any map $a \colon fu \rightarrow v$ in $\mathbb{G}$, the unique
  map of $\mathbb{K}$ with domain $u$ whose $f_1$-image is $a$ is
  $a \colon u \tor \tilde f(a,u)$. So $f_1$ is a discrete opfibration, and we
  can form $(f_1)^\ast \colon \mathbb{G} \rightsquigarrow \mathbb{K}$.
  It is now direct from the definitions that
  $f = (f_2)_\ast (f_1)^\ast$ as in~\eqref{eq:6}.
\end{proof}
In fact, we can equally define comorphisms
$\mathbb{G} \rightsquigarrow \mathbb{H}$ as (equivalence classes of)
spans $\mathbb{G} \leftarrow \mathbb{K} \rightarrow \mathbb{H}$ with
left leg a discrete opfibration and right leg bijective-on-objects;
this is essentially the original definition
of~\cite{Higgins1993Duality}. The following ``Beck--Chevalley lemma''
shows that composition of comorphisms corresponds to the composition
of the representing spans by pullback.
\begin{Lemma}
  \label{lem:2}
  Given a commuting square of homomorphisms
  \begin{equation*}
    \cd{
      {\mathbb{G}} \ar[r]^-{f} \ar[d]_{h} &
      {\mathbb{H}} \ar[d]^{k} \\
      {\mathbb{K}} \ar[r]^-{g} &
      {\mathbb{L}}
    }
  \end{equation*}
  where both $f$ and $g$ are bijective on objects and both $h$ and $k$
  are discrete opfibrations\footnote{Such a square is necessarily a pullback.}, we have
  $f_\ast h^\ast = k^\ast g_\ast \colon \mathbb{K} \rightsquigarrow
  \mathbb{H}$.
\end{Lemma}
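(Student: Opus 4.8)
The plan is to compute the two composite comorphisms $f_\ast h^\ast$ and $k^\ast g_\ast$ from $\mathbb K$ to $\mathbb H$ explicitly and check that the resulting data coincide. Each side is a composite of comorphisms, hence automatically a comorphism by Definition~\ref{def:1}, so no verification of the axioms (i)--(iii) is required; it is enough to match the underlying functions $\mathbb H_0 \to \mathbb K_0$ and the arrow assignments.

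First I would unwind the four comorphisms in play. The comorphism $h^\ast \colon \mathbb K \rightsquigarrow \mathbb G$ acts on objects as $h \colon \mathbb G_0 \to \mathbb K_0$, and sends an arrow $a \colon hu \to v$ of $\mathbb K$ (with $u \in \mathbb G_0$) to the unique $h$-lift $\hat a \colon u \to \hat v$ of $a$ in $\mathbb G$ --- which exists and is unique since $h$ is a discrete opfibration --- where $h(\hat v) = v$ and $h(\hat a) = a$. The comorphism $f_\ast \colon \mathbb G \rightsquigarrow \mathbb H$ acts on objects as the inverse bijection $w \mapsto f_\ast w$ of $f$, and sends an arrow $b \colon f_\ast w \to v'$ of $\mathbb G$ (with $w \in \mathbb H_0$) to $f(b) \colon w \to f(v')$ of $\mathbb H$; the comorphisms $g_\ast$ and $k^\ast$ are described in the same way. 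Feeding these into the evident composition of comorphisms, I obtain that $f_\ast h^\ast$ acts on objects as $\mathbb H_0 \xrightarrow{f_\ast} \mathbb G_0 \xrightarrow{h} \mathbb K_0$ and sends $a \colon h f_\ast w \to v$ in $\mathbb K$ to $f(\hat a)$, where $\hat a$ is the unique $h$-lift of $a$ with domain $f_\ast w$; whereas $k^\ast g_\ast$ acts on objects as $\mathbb H_0 \xrightarrow{k} \mathbb L_0 \xrightarrow{g_\ast} \mathbb K_0$ and sends the same arrow $a$ to the unique $k$-lift of $g(a)$ with domain $w$.

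It then remains to compare the two descriptions. The object actions agree because $kf = gh$ and both $f$ and $g$ are bijective on objects, which forces $h \circ f_\ast = g_\ast \circ k \colon \mathbb H_0 \to \mathbb K_0$; in particular the two composites are applied to the same family of arrows of $\mathbb K$. For the arrow assignments, the key observation is that $f(\hat a)$ is itself the unique $k$-lift of $g(a)$ with domain $w$: indeed $f(\hat a)$ has domain $f(f_\ast w) = w$, while $k(f(\hat a)) = (kf)(\hat a) = (gh)(\hat a) = g(h(\hat a)) = g(a)$, so uniqueness of liftings for the discrete opfibration $k$ gives $f(\hat a) = \widehat{g(a)}$. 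Hence $f_\ast h^\ast = k^\ast g_\ast$.

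I do not expect a genuine obstacle here: the content is a short diagram chase. The only thing to be careful about is bookkeeping --- comorphisms run opposite to their actions on objects, the operation $(\thg)_\ast$ inverts a bijective-on-objects homomorphism on objects while $(\thg)^\ast$ uses unique arrow lifting, and these must be plugged into the composition of comorphisms in the correct order. Note that the argument makes no use of the footnote remark that such a square is automatically a pullback; indeed this lemma is precisely what is needed to see that composition of comorphisms corresponds to composition of the representing spans by pullback.
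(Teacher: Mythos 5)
Your proposal is correct and follows essentially the same route as the paper's proof: compute both composites explicitly, note the object actions agree because $gh = kf$ on objects, and then observe that $f(\hat a)$ has domain $w$ and $k$-image $kf(\hat a) = gh(\hat a) = g(a)$, so uniqueness of $k$-liftings forces the two arrow assignments to coincide. The bookkeeping of directions is handled correctly throughout.
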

\begin{proof}
  On objects, the two composites act by $x \mapsto g^{-1}k(x)$ and
  $x \mapsto hf^{-1}(x)$; these coincide since $gh = kf$ on objects.
  For a map $a \colon g^{-1}k(x) = hf^{-1}(x) \rightarrow y$ in
  $\mathbb{K}$, its image $(k^\ast g_\ast)(a)_x$ is the unique map of
  $\mathbb{H}$ with domain $x$ and $k$-image $g(a)$. On the other
  hand, $(f_\ast h^\ast)(a)_x = f(a')$, where
  $a'$ is the unique map of $\mathbb{G}$ with domain
  $f^{-1}(x)$ and $h$-image $a$. It follows that $f(a')$ has domain $x$ and
  $k$-image $kf(a'') = gh(a'') = g(a)$, and so we
  have $(f_\ast
  h^\ast)_x(a) = (k^\ast g_\ast)_x(a)$ as desired.
\end{proof}

Since we will be interested in automorphisms in the category
$\cat{Grpd}_\mathrm{co}$, the following lemma will be useful; its
straightforward proof is left to the reader.
\begin{Lemma}
  \label{lem:4}
  If $f \colon \mathbb{G} \rightsquigarrow \mathbb{H}$ is an
  invertible map in $\cat{Grpd}_\mathrm{co}$, then 
  $f = g_\ast$ for a unique invertible functor
  $g \colon \mathbb{G} \rightarrow \mathbb{H}$; moreover, we have
  $f^{-1} = g^\ast$.
% A homomorphism of groupoids
%   $f \colon \mathbb{G} \rightarrow \mathbb{H}$ is invertible just when
%   it is both bijective-on-objects and also a discrete opfibration. For
%   such an $f$ we can form both
%   $f_\ast \colon \mathbb{H} \rightsquigarrow \mathbb{G}$ and also
%   $f^\ast \colon \mathbb{G} \rightsquigarrow \mathbb{H}$ and these are
%   inverse to each other. It is easy to see that every isomorphism on
%   $\cat{Grpd}_\mathrm{co}$ arises in this way.
\end{Lemma}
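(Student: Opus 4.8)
The plan is to read the invertible functor $g$ directly off the lifting data of $f$, using invertibility only at two points: to force the object-function of $f$ to be a bijection, and to upgrade the resulting functor $g$ to an isomorphism.

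First I would pin down the object-function. Write $f \colon \mathbb{H}_0 \to \mathbb{G}_0$ for the object-part of $f$ and $f^{-1} \colon \mathbb{G}_0 \to \mathbb{H}_0$ for that of its inverse comorphism. Since comorphisms compose by composing object-functions in the opposite order, the equations $f^{-1}f = 1_\mathbb{G}$ and $ff^{-1} = 1_\mathbb{H}$ in $\cat{Grpd}_\mathrm{co}$ say exactly that these two object-functions are mutually inverse bijections. I may therefore define a functor $g \colon \mathbb{G} \to \mathbb{H}$ with object-part the bijection $f^{-1} \colon \mathbb{G}_0 \to \mathbb{H}_0$, and with action on an arrow $b \colon p \to q$ of $\mathbb{G}$ given by $g(b) \defeq f(b)_{g(p)}$. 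This is well-typed: as $f(g(p)) = p$, the arrow $b$ is liftable at $u = g(p)$, giving $f(b)_{g(p)} \colon g(p) \to \tilde f(b,g(p))$, and axiom~(i) together with the bijectivity of $f$ forces $\tilde f(b,g(p)) = g(q)$, so that $g(b) \colon g(p) \to g(q)$ as required.

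Next I would verify that $g$ is a bijective-on-objects functor and that $f = g_\ast$. Functoriality is immediate from the comorphism axioms: axiom~(ii) gives $g(1_p) = 1_{g(p)}$, and axiom~(iii), applied at $u = g(p)$ with $\tilde f(b,u) = g(q)$ as just computed, gives $g(c)\,g(b) = f(c)_{g(q)} \circ f(b)_{g(p)} = f(cb)_{g(p)} = g(cb)$ for composable $b,c$. That $g$ is bijective on objects is clear since its object-part is the bijection $f^{-1}$. Comparing with the definition of $(\thg)_\ast$ then shows $f = g_\ast$ on the nose: both have object-function $f$, and for $a \colon f(u) \to v$ the prescription $g_\ast(a)_u = g(a) = f(a)_{g(f(u))} = f(a)_u$ recovers the arrow-part of $f$.

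Finally I would establish invertibility of $g$ and the identity $f^{-1} = g^\ast$. The assignment $(\thg)_\ast$ is functorial, $(kg)_\ast = k_\ast g_\ast$, and faithful in the strong sense that $g_\ast = g'_\ast$ forces $g = g'$ (the object-parts agree, and the arrow-part of $g_\ast$ determines $g$ on every arrow); this last point already yields the uniqueness clause of the lemma. Applying the construction of the previous paragraphs to the invertible comorphism $f^{-1}$ produces a bijective-on-objects functor $h \colon \mathbb{H} \to \mathbb{G}$ with $f^{-1} = h_\ast$. The equations $f^{-1}f = 1_\mathbb{G}$ and $ff^{-1} = 1_\mathbb{H}$ become $(hg)_\ast = (1_\mathbb{G})_\ast$ and $(gh)_\ast = (1_\mathbb{H})_\ast$, so faithfulness gives $hg = 1_\mathbb{G}$ and $gh = 1_\mathbb{H}$; thus $g$ is an invertible functor with $g^{-1} = h$. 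Since $g$ is in particular a discrete opfibration, $g^\ast$ is defined, and a direct unravelling of the two definitions shows $g^\ast = (g^{-1})_\ast$ for any isomorphism $g$ (both have object-part $g \colon \mathbb{G}_0 \to \mathbb{H}_0$ and send $a \colon g(p) \to v$ to its unique $g$-preimage $g^{-1}(a)$). Hence $f^{-1} = h_\ast = (g^{-1})_\ast = g^\ast$, as claimed.

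I expect the only real friction to be bookkeeping: keeping straight that $(\thg)_\ast$ reverses object-functions while composing covariantly as comorphisms, and confirming the faithfulness of $(\thg)_\ast$. The sole genuinely load-bearing use of invertibility is in the two places flagged above—forcing $f$ bijective on objects and promoting $g$ to an isomorphism—neither of which is available from the axioms of a single comorphism; everything else is routine diagram-chasing through Definition~\ref{def:1}.
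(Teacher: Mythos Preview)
Your argument is correct and is precisely the kind of direct verification the paper intends: the paper omits the proof entirely, calling it ``straightforward'' and leaving it to the reader. Your construction of $g$ from the lifting data of $f$, the check that $f = g_\ast$, and the use of functoriality and faithfulness of $(\thg)_\ast$ to deduce invertibility and $f^{-1} = (g^{-1})_\ast = g^\ast$ all go through as written.
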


Finally in this section, we discuss the relationship between
comorphisms and bisections. This is most clearly expressed in the terms
of the fully faithful functor
\begin{equation*}
  \Sigma \colon \cat{Grp} \rightarrow \cat{Grpd}_\mathrm{co}
\end{equation*}
from the category of groups which on objects takes $G$ to the
corresponding one-object groupoid $\Sigma G$. We will show that the
taking of bisections provides a right adjoint to this functor.

First observe that the set $\mathrm{Bis}(\mathbb{G})$ of bisections of
a groupoid $\mathbb{G}$ is indeed a group under the operation on
bisections $\beta, \alpha \mapsto \beta \cdot \alpha$ given by
\begin{equation*}
  (\beta \cdot \alpha)_u = u \xrightarrow{\alpha_u} \tilde \alpha(u) \xrightarrow{\beta_{\tilde
  \alpha(u)}} \tilde \beta(\tilde \alpha(u))\rlap{ .}
\end{equation*}
The identity element is the bisection $\mathbf 1$ with
$(\mathbf 1)_u = 1_u$. The inverse of the bisection $\alpha$ is the
bisection $\alpha^{-1}$ determined by
$(\alpha^{-1})_{\tilde \alpha(u)} = (\alpha_u)^{-1}$. % In this
% situation, we have a comorphism
% $\varepsilon_\mathbb{G} \colon \Sigma (\mathrm{Bis}(\mathbb{G}))
% \rightarrow \mathbb{G}$ which on objects sends each
% $u \in \mathbb{G}_0$ to the unique object $\ast$ of
% $\Sigma(\mathrm{Bis}(G))$, and which on morphisms is given by
% \begin{equation*}
%   \varepsilon_\mathbb{G}(u) \xrightarrow{\alpha} \ast \qquad \mapsto
%   \qquad u \xrightarrow{\alpha_u} \tilde \alpha(u)\rlap{ .}
% \end{equation*}

\begin{Prop}
  \label{prop:4}
  The full embedding 
  $\Sigma \colon \cat{Grp} \rightarrow \cat{Grpd}_\mathrm{co}$ has a
  right adjoint whose value at a groupoid $\mathbb{G}$ is given by
  the group of bisections $\mathrm{Bis}(\mathbb{G})$.
\end{Prop}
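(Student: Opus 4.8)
The plan is to show, for each small groupoid $\mathbb{G}$, that there is a bijection
\[
  \cat{Grpd}_\mathrm{co}(\Sigma G, \mathbb{G}) \;\cong\; \cat{Grp}\bigl(G, \mathrm{Bis}(\mathbb{G})\bigr)
\]
natural in the group $G$, and then to invoke the pointwise criterion for the existence of a right adjoint: $\Sigma$ admits a right adjoint exactly when each functor $\cat{Grpd}_\mathrm{co}(\Sigma(\thg), \mathbb{G}) \colon \cat{Grp}^{\mathrm{op}} \to \cat{Set}$ is representable, in which case the representing object is the value of that adjoint at $\mathbb{G}$.

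First I would unwind what a comorphism $f \colon \Sigma G \rightsquigarrow \mathbb{G}$ amounts to. Since $\Sigma G$ has a single object $\star$ with $(\Sigma G)(\star,\star) = G$, the object-function $\mathbb{G}_0 \to (\Sigma G)_0$ is forced, and the remaining datum is, for each $u \in \mathbb{G}_0$ and each $g \in G$, a map $f(g)_u \colon u \to \tilde f(g,u)$ in $\mathbb{G}$. Axiom~(i) becomes vacuous; axiom~(ii) reads $f(e)_u = 1_u$; and axiom~(iii) reads $f(h)_{\tilde f(g,u)} \circ f(g)_u = f(hg)_u$ for all $g,h \in G$. I would then observe that for fixed $g$ the family $f(g)_\bullet = (f(g)_u)_{u \in \mathbb{G}_0}$ is a bisection: from (ii) and (iii) the morphism $f(g^{-1})_{\tilde f(g,u)} \circ f(g)_u = f(e)_u = 1_u$ is an endomorphism of $u$, which forces $\tilde f(g^{-1}, \tilde f(g,u)) = u$, and symmetrically, so $\tilde f(g,\thg)$ is a bijection with inverse $\tilde f(g^{-1},\thg)$. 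With this in hand, axiom~(iii) says precisely that $g \mapsto f(g)_\bullet$ carries the product $hg$ of $G$ to the product $f(h)_\bullet \cdot f(g)_\bullet$ in $\mathrm{Bis}(\mathbb{G})$ — using the formula $(\beta\cdot\alpha)_u = \beta_{\tilde\alpha(u)}\circ\alpha_u$ — and axiom~(ii) that it sends $e$ to $\mathbf 1$. So $f \mapsto (g \mapsto f(g)_\bullet)$ is a well-defined map to $\cat{Grp}(G, \mathrm{Bis}(\mathbb{G}))$. Conversely, a homomorphism $\varphi \colon G \to \mathrm{Bis}(\mathbb{G})$ yields a comorphism via $f(g)_u \defeq \varphi(g)_u$ and $\tilde f(g,u) \defeq \widetilde{\varphi(g)}(u)$, the three comorphism axioms being exactly the vacuous condition, the unit law, and the multiplicativity of $\varphi$; these two assignments are visibly mutually inverse.

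Next I would check naturality in $G$: for a homomorphism $\psi \colon G' \to G$ the comorphism $f \circ \Sigma\psi$ corresponds to $(g \mapsto f(g)_\bullet)\circ\psi$, which is immediate since $\Sigma\psi$ acts on morphisms by $\psi$. One may additionally record naturality in $\mathbb{G}$, which pins down how the prospective right adjoint $\mathrm{Bis}$ acts on comorphisms; equivalently, the counit $\epsilon_\mathbb{G} \colon \Sigma\,\mathrm{Bis}(\mathbb{G}) \rightsquigarrow \mathbb{G}$ is the ``evaluation'' comorphism sending a bisection $\alpha$ and object $u$ to $\alpha_u \colon u \to \tilde\alpha(u)$, and the universal property of $\epsilon_\mathbb{G}$ is precisely the bijection above. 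Applying the representability criterion then completes the proof.

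I do not anticipate any serious obstacle. The one point that genuinely needs care is the direction in which comorphisms run over objects, together with the verification that each slice $f(g)_\bullet$ is a \emph{bisection} (i.e.\ that its object-component is bijective), where the group inverses in $G$ and comorphism axioms (ii) and (iii) are used; the rest is a routine matching of the comorphism axioms against the group axioms for $\mathrm{Bis}(\mathbb{G})$.
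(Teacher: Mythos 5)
Your proposal is correct and follows essentially the same route as the paper: unwind the comorphism axioms for $\Sigma G \rightsquigarrow \mathbb{G}$, observe that axiom (i) is vacuous while (ii) and (iii) make $(g,u)\mapsto \tilde f(g,u)$ a $G$-action (hence each $\tilde f(g,\thg)$ bijective, so each slice is a bisection), identify the result with a group homomorphism $G \to \mathrm{Bis}(\mathbb{G})$, and conclude by naturality and pointwise representability. Your explicit verification that $\tilde f(g^{-1},\thg)$ inverts $\tilde f(g,\thg)$ is just a spelled-out version of the paper's appeal to the action being by a group, and your identification of the counit as evaluation is a harmless addition.
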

\begin{proof}
  Let $H$ be a group and $\mathbb{G}$ a groupoid, and consider what it
  is to give a comorphism $f \colon \Sigma H \rightarrow \mathbb{G}$.
  On objects, $f$ must send each object $u \in \mathbb{G}$ to the
  unique object $\ast$ of $\Sigma H$. On morphisms, we must give an
  assignation
  \begin{equation*}
    \bigl(f(u) \xrightarrow{a} \ast\bigr) \quad \mapsto \quad \bigl(u
    \xrightarrow{f(a)_u} \tilde f(a,u)\bigr)\rlap{ ,}
  \end{equation*}
  where $a \in H$ and $u \in \mathbb{G}_0$, subject to the axioms
  (i)--(iii) of Definition~\ref{def:1}. Axiom~(i) is trivial as
  $\Sigma H$ has only one object. Axioms (ii) and (iii) state that
  \begin{equation}\label{eq:17}
    f(1_H)_u = 1_u  \qquad \text{and} \qquad 
    f(ba)_u = f(b)_{\tilde{f}(a,u)} \circ f(a)_u
  \end{equation}
  for all $a,b \in H$ and all $u \in \mathbb{H}_0$. Taking
  codomains, we have
  \begin{equation*}
    \tilde f(1_H,u) = u \qquad \text{and} \qquad \tilde f(ba,u) =
    \tilde f(b, \tilde f(a,u))
  \end{equation*}
  so that the assignation $(a, u) \mapsto \tilde f(a, u)$ is an
  $H$-action on $\mathbb{G}_0$. In particular, for each $a \in H$
  the function $u \mapsto \tilde f(a,u)$ is invertible, so that the
  collection of maps
  $\bigl(f(a)_u \colon u \rightarrow \tilde f(a,u)\bigr)$ constitutes
  a bisection $\bar f(a) \in \mathrm{Bis}(\mathbb{G})$. In these
  terms, the conditions~\eqref{eq:17} state precisely that the mapping
  $\bar f \colon H \rightarrow \mathrm{Bis}(\mathbb{G})$ so obtained
  is a group homomorphism. In this way, we have produced bijections
  \begin{equation}\label{eq:18}
    \bigl(\,\overline{\phantom{h}}\,\bigr) \colon \cat{Grpd}_\mathrm{co}(\Sigma H, \mathbb{G}) \rightarrow
    \cat{Gpd}\bigl(H, \mathrm{Bis}(\mathbb{G})\bigr)\rlap{ .}
  \end{equation}
  which are easily seen to be natural in $H$. This shows, as claimed,
  that $\Sigma$ has a right adjoint whose value at the groupoid
  $\mathbb{G}$ is given by $\mathrm{Bis}(\mathbb{G})$.
\end{proof}

A consequence of the adjointness exhibited above is that the
assignation $\mathbb{G} \mapsto \mathrm{Bis}(\mathbb{G})$ extends
uniquely to a functor
$\mathrm{Bis} \colon \cat{Grpd}_\mathrm{co} \rightarrow \cat{Grp}$
making the bijections~\eqref{eq:18} natural in $\mathbb{G}$ as well as
$H$. In other words, there is a canonical way of transporting
bisections along comorphisms. To read off an explicit formula for this
transport, note that, since bisections of $\mathbb{G}$ correspond
bijectively to group homomorphisms
$\mathbb{Z} \rightarrow \mathrm{Bis}(\mathbb{G})$, they also
correspond bijectively to comorphisms
$\Sigma \mathbb{Z} \rightsquigarrow \mathbb{G}$. In these terms, the
transport of a such a bisection along a comorphism
$\mathbb{G} \rightsquigarrow \mathbb{H}$ is given simply by
postcomposition. Spelling this out, we obtain:

\begin{Defn}
  \label{def:2}
  Given a a comorphism
  $f \colon \mathbb{G} \rightsquigarrow \mathbb{H}$ and a bisection
  $\alpha$ of $\mathbb{G}$, the \emph{pushforward bisection} $f\alpha$
  of $\mathbb{H}$ is defined by
  $(f\alpha)_u = f(\alpha_{fu})_u \colon u \rightarrow
  \tilde f(\alpha_{fu},u)$.
\end{Defn}

In particular, if $f \colon \mathbb{G} \rightarrow \mathbb{H}$ is a
bijective-on-objects functor, then pushing forward the bisection
$\alpha$ of $\mathbb{G}$ along $f_\ast$ yields the bisection
$f_\ast \alpha$ of $\mathbb{H}$ whose components are determined by
\begin{equation}\label{eq:7}
  (f_\ast \alpha)_{fu} = f(\alpha_{u})\rlap{ .}
\end{equation}
On the other hand, if $f \colon \mathbb{H} \rightarrow \mathbb{G}$ is
a discrete opfibration, then we can push forward $\alpha$ along
$f^\ast$ to obtain the bisection $f^\ast \alpha$ of $\mathbb{H}$
uniquely determined by
\begin{equation}\label{eq:15}
  f\big((f^\ast \alpha)_u\big) = \alpha_{fu}\rlap{ .}
\end{equation}

\section{Inner automorphisms of groupoids}
\label{sec:inner-autom-group}

We now have all the necessary background to prove our main result. We
begin with the easier direction. % We
% begin with a simple lemma which will ease the task of understanding
% the extended inner automorphism condition in $\cat{Grpd}_\mathrm{co}$.
% 
% \begin{Lemma}
%   \label{lem:1}
%   A square of comorphisms
%   \begin{equation}\label{eq:8}
%     \cd{
%       {\mathbb{G}} \ar@{~>}[r]^-{f_\ast} \ar@{~>}[d]_{h} &
%       {\mathbb{H}} \ar@{~>}[d]^{k} \\
%       {\mathbb{K}} \ar@{~>}[r]^-{g_\ast} &
%       {\mathbb{L}}
%     }
%   \end{equation}
%   commutes if, and only if:
%   \begin{enumerate}[(i)]
%   \item $fh(u) = kg(u)$ for all
%   $u \in \mathbb{K}$;
%   \item For all maps $a \colon hu \rightarrow v$ in $\mathbb{G}$ we
%     have $g(h_u a) = k_{gu}(fa)$.
%   \end{enumerate}
% \end{Lemma}
% \begin{proof}
%   Since $g$ is bijective on objects, each object of $\mathbb{L}$ is of
%   the form $gu$ for a unique $u \in \mathbb{K}$. On such an object,
%   the lower and upper paths around~\eqref{eq:8} are given by
%   $gu \mapsto u \mapsto hu$ and by $gu \mapsto kgu \mapsto f^{-1}kgu$
%   (using that $f$ is bijective on objects). These are equal if and
%   only if $fhu = gku$ for all $u \in \mathbb{K}$. Turning now to
%   morphisms, given a typical object $gu$ of $\mathbb{L}$ and map
%   $a \colon hu \rightarrow v$ out of its image under the common
%   composite on objects, the equality of the two paths around~\eqref{eq:8} says
%   exactly that $g(h_ua) = k_{gu}(fa)$.
% \end{proof}
% 
% We exploit this lemma in proving:
\begin{Prop}
  \label{prop:2}
  Each bisection $\alpha$ of the groupoid $\mathbb{G}$ gives an extended
  inner automorphism of $\mathbb{G}$ in
  $\cat{Grpd}_\mathrm{co}$ whose component at $f \colon \mathbb{G}
  \rightsquigarrow \mathbb{H}$ is the conjugation isomorphism
  $( c_{f\alpha} )_\ast \colon \mathbb{H} \rightsquigarrow
  \mathbb{H}$. 
\end{Prop}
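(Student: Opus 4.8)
The plan is to check the two requirements of Definition~\ref{def:4} for the proposed family $\bigl((c_{f\alpha})_\ast\bigr)_f$: that each component is an automorphism in $\cat{Grpd}_\mathrm{co}$, and that the squares~\eqref{eq:16} commute. The first requirement is immediate. Since $f\alpha$ is a bisection of $\mathbb{H}$ (Definition~\ref{def:2}), the conjugation map $c_{f\alpha}\colon\mathbb{H}\rightarrow\mathbb{H}$ of~\eqref{eq:3} is an isomorphism of groupoids, in particular bijective on objects; hence $(c_{f\alpha})_\ast$ is a well-defined comorphism and is invertible in $\cat{Grpd}_\mathrm{co}$ by Lemma~\ref{lem:4}.

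The substance of the proof lies in the naturality. Given composable comorphisms $f\colon\mathbb{G}\rightsquigarrow\mathbb{H}$ and $g\colon\mathbb{H}\rightsquigarrow\mathbb{K}$, we must prove $g\circ(c_{f\alpha})_\ast=(c_{(gf)\alpha})_\ast\circ g$. As $\mathrm{Bis}$ is a functor on $\cat{Grpd}_\mathrm{co}$ we have $(gf)\alpha=g(f\alpha)$, so, writing $\beta=f\alpha$ for the resulting bisection of $\mathbb{H}$, it suffices to show that for every comorphism $g\colon\mathbb{H}\rightsquigarrow\mathbb{K}$ and every bisection $\beta$ of $\mathbb{H}$,
\[
g\circ(c_\beta)_\ast=(c_{g\beta})_\ast\circ g\rlap{ .}
\]
To prove this I would factor $g$ using Proposition~\ref{prop:5} as $(g_2)_\ast\circ(g_1)^\ast$ with $g_1$ a discrete opfibration and $g_2$ bijective on objects; a short diagram chase, once more using functoriality of $\mathrm{Bis}$, then reduces the displayed identity to the two special cases $g=h_\ast$ and $g=p^\ast$.

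The bijective-on-objects case is painless: when $g=h_\ast$ for a bijective-on-objects homomorphism $h\colon\mathbb{H}\rightarrow\mathbb{K}$, both sides of the identity are comorphisms of the form $(\thg)_\ast$, so by functoriality of $(\thg)_\ast$ it reduces to the equality of homomorphisms $h\circ c_\beta=c_{h_\ast\beta}\circ h$, which one reads off directly from the formula~\eqref{eq:3} for conjugation and the formula~\eqref{eq:7} for the pushforward bisection $h_\ast\beta$. The discrete-opfibration case, $g=p^\ast$ for $p\colon\mathbb{K}\rightarrow\mathbb{H}$ a discrete opfibration, carries the real content and is where I expect the difficulty to lie. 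Here I would unwind the definition of composition of comorphisms on each side. On objects, both composites send $u\in\mathbb{K}_0$ to $\tilde\beta^{-1}(pu)$, using the identity $p\circ\widetilde{p^\ast\beta}=\tilde\beta\circ p$, which is immediate from the defining property~\eqref{eq:15} of $p^\ast\beta$. On a morphism $a\colon\tilde\beta^{-1}(pu)\rightarrow v$ of $\mathbb{H}$, each side produces a morphism of $\mathbb{K}$ with domain $u$; applying $p$ to both and invoking~\eqref{eq:15} once more in the form $p\bigl((p^\ast\beta)_z\bigr)=\beta_{pz}$, one computes that the two morphisms have the same $p$-image, namely the conjugate $\beta_v\circ a\circ\beta_{\tilde\beta^{-1}(pu)}^{-1}$ of $a$. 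Since $p$ is a discrete opfibration, uniqueness of liftings then forces the two morphisms to agree. Reconciling ``lift, then conjugate'' with ``conjugate, then lift'' by uniqueness of liftings is the only step needing genuine care; everything else is routine bookkeeping with the axioms of Definition~\ref{def:1}.
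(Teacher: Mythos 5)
Your proof is correct and follows essentially the same route as the paper's: reduce by functoriality of pushforward to a single bisection, factor the comorphism $g$ via Proposition~\ref{prop:5}, and verify the two special cases using~\eqref{eq:7} and~\eqref{eq:15}. The only divergence is in the discrete-opfibration case, where the paper replaces the horizontal comorphisms $(c_\beta)_\ast$ by their inverses $(c_\beta)^\ast$ (Lemma~\ref{lem:4}) so as to check a square of ordinary homomorphisms, whereas you unwind the comorphism composites directly and close the argument by uniqueness of liftings; both versions come down to the same computation.
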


% Note that each $(c_{fa})_\ast$ is indeed invertible
% by Remark~\ref{rk:1}.
\begin{proof}
  We must check that, for each bisection $\alpha$ of a groupoid $\mathbb{G}$, and each
  $f \colon \mathbb{G} \rightsquigarrow
  \mathbb{H}$ and $g \colon \mathbb{H} \rightsquigarrow \mathbb{K}$, the
  square of comorphisms left below commutes:
  \begin{equation}\label{eq:9}
    \cd[@C+2em]{
      {\mathbb{H}} \ar@{~>}[r]^-{(c_{f\alpha} )_\ast} \ar@{~>}[d]_{g} &
      {\mathbb{H}} \ar@{~>}[d]^{g} \\
      {\mathbb{K}} \ar@{~>}[r]^-{( c_{gf\alpha} )_\ast} &
      {\mathbb{K}}} \qquad  \qquad 
    \cd[@C+2em]{
      {\mathbb{G}} \ar@{~>}[r]^-{( c_{\alpha} )_\ast} \ar@{~>}[d]_{g} &
      {\mathbb{G}} \ar@{~>}[d]^{g} \\
      {\mathbb{K}} \ar@{~>}[r]^-{( c_{g\alpha} )_\ast} &
      {\mathbb{K}}\rlap{ .}
    }
  \end{equation}
  Since $f\alpha$ is a bisection of $\mathbb{H}$, we can without loss
  of generality assume that $\mathbb{H} = \mathbb{G}$ and $f = 1$, and
  so reduce to checking commutativity as right above. By
  Proposition~\ref{prop:5} we can in turn reduce to the cases where
  $g = f_\ast$ or where $g = f^\ast$. If $g = f_\ast$ for a
  bijective-on-objects $f$, then we need only check commutativity to
  the left in:
  \begin{equation*}
        \cd[@C+2em]{
      {\mathbb{G}} \ar[r]^-{c_{\alpha}} \ar[d]_{f} &
      {\mathbb{G}} \ar[d]^{f} \\
      {\mathbb{K}} \ar[r]^-{c_{f_\ast(\alpha)}} &
      {\mathbb{K}}
    }\qquad\qquad 
       \cd[@C+2em]{
      {\mathbb{K}} \ar[r]^-{c_{f^\ast(\alpha)}} \ar[d]_{f} &
      {\mathbb{K}} \ar[d]^{f} \\
      {\mathbb{G}} \ar[r]^-{c_{\alpha}} &
      {\mathbb{G}}\rlap{ ;}
    }
  \end{equation*}
  and this holds at a map $a \colon u \rightarrow v$ of
  $\mathbb{G}$ since, by functoriality of $f$ and~\eqref{eq:7},
  \begin{equation*}
      f(\alpha_v \circ a \circ \alpha_u^{-1}) = f(\alpha_v) \circ fa
      \circ f(\alpha_u)^{-1} = (f_\ast\alpha)_v \circ fa \circ (f_\ast
      \alpha)_u\rlap{ .}
  \end{equation*}
  On the other hand, if $g = f^\ast$ for a discrete opfibration $f$,
  then on replacing the horizontal maps $(c_\alpha)_\ast$ and
  $(c_{f^\ast\alpha})_\ast$ in~\eqref{eq:9} by their inverses
  $(c_\alpha)^\ast$ and $(c_{f^\ast\alpha})^\ast$, we may reduce to
  checking commutativity of the square right above. This equality is verified
  at $a \colon u \rightarrow v$ in $\mathbb{K}$ since, by
  functoriality of $f$ and~\eqref{eq:15},
  \begin{equation*}
    f\big( (f^\ast \alpha)_v \circ a \circ (f^\ast \alpha)_u^{-1}\big)
    = f\big( (f^\ast \alpha)_v\big) \circ fa \circ f\big((f^\ast \alpha)_u^{-1}\big)
    = \alpha_{fv} \circ fa \circ \alpha_{fu}^{-1}\text{ .} \qedhere
  \end{equation*}
\end{proof}
It remains to show that:
\begin{Prop}
  \label{prop:3}
  Each extended inner automorphism of $\mathbb{G}$ in
  $\cat{Grpd}_\mathrm{co}$ is induced in the manner of
  Proposition~\ref{prop:2} from a \emph{unique} bisection $\alpha$ of
  $\mathbb{G}$.
\end{Prop}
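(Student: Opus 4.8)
The plan is to show that the assignment $\alpha \mapsto \bigl((c_{f\alpha})_\ast \mid f \colon \mathbb{G} \rightsquigarrow \mathbb{H}\bigr)$ from Proposition~\ref{prop:2} is a bijection onto the set of extended inner automorphisms. Given an extended inner automorphism $\beta$ of $\mathbb{G}$ in $\cat{Grpd}_\mathrm{co}$, I will first extract a bisection $\alpha$ from the single component $\beta_{1_\mathbb{G}}$, then verify that $\beta$ is the extended inner automorphism induced by that $\alpha$, and finally check uniqueness.

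First I would recover the bisection. By Lemma~\ref{lem:4}, the automorphism $\beta_{1_\mathbb{G}} \colon \mathbb{G} \rightsquigarrow \mathbb{G}$ in $\cat{Grpd}_\mathrm{co}$ is of the form $g_\ast$ for a unique invertible functor $g \colon \mathbb{G} \rightarrow \mathbb{G}$. The naturality square~\eqref{eq:16} applied to suitably chosen comorphisms out of $\mathbb{G}$ will then force $g$ to be a conjugation automorphism $c_\alpha$ for a unique bisection $\alpha$. Concretely, I would test $\beta$ against comorphisms $\Sigma\mathbb{Z} \rightsquigarrow \mathbb{G}$ — equivalently, by the discussion following Proposition~\ref{prop:4}, against bisections themselves — and against the probe objects used in the proof of Proposition~\ref{prop:1} (coproducts $\mathbb{G} + 1$ and $\mathbb{G} + \J$), but now living in $\cat{Grpd}_\mathrm{co}$, where the extra comorphisms that are \emph{not} homomorphisms are exactly what make these probes informative rather than fatal. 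The key point is that a comorphism $\Sigma\mathbb{Z} \rightsquigarrow \mathbb{G}$ picks out an element of $\mathbb{G}_1$ at each object, and functoriality of $\beta$ against all such comorphisms pins down $g$ on objects and on morphisms. One natural packaging: use the counit of the adjunction $\Sigma \dashv \mathrm{Bis}$ from Proposition~\ref{prop:4}, i.e. the universal comorphism $\varepsilon_\mathbb{G} \colon \Sigma\,\mathrm{Bis}(\mathbb{G}) \rightsquigarrow \mathbb{G}$, as a single probe that captures all bisections at once; the component $\beta_{\varepsilon_\mathbb{G}}$ is an automorphism of the one-object groupoid $\Sigma\,\mathrm{Bis}(\mathbb{G})$, hence an automorphism of the group $\mathrm{Bis}(\mathbb{G})$, and naturality relates it to $\beta_{1_\mathbb{G}}$.

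Once $\alpha$ is in hand, I would show $\beta_f = (c_{f\alpha})_\ast$ for every $f \colon \mathbb{G} \rightsquigarrow \mathbb{H}$. Since $f\alpha$ is the pushforward along $f$ (Definition~\ref{def:2}) and pushforward is functorial, it suffices — by the same reduction as in Proposition~\ref{prop:2}, using Proposition~\ref{prop:5} to split $f$ into a discrete-opfibration part and a bijective-on-objects part — to check the two cases $f = h^\ast$ and $f = h_\ast$. In each case, naturality~\eqref{eq:16} with $g = f$ and the source object $1_\mathbb{G}$ expresses $\beta_f$ in terms of $\beta_{1_\mathbb{G}} = (c_\alpha)_\ast$, and the explicit transport formulas~\eqref{eq:7} and~\eqref{eq:15} identify the result as $(c_{f\alpha})_\ast$. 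Uniqueness of $\alpha$ is immediate: $\alpha$ is recovered from the single component $\beta_{1_\mathbb{G}}$, and two distinct bisections give distinct conjugation automorphisms $c_\alpha \neq c_{\alpha'}$ (they differ already on objects if $\tilde\alpha \neq \tilde{\alpha'}$, and otherwise on some morphism), hence distinct components at $1_\mathbb{G}$ by Lemma~\ref{lem:4}.

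The main obstacle I anticipate is the first step — extracting $\alpha$ and verifying that $\beta_{1_\mathbb{G}}$ really is a conjugation automorphism. In $\cat{Grpd}$ the analogous probing collapsed everything to the identity (Proposition~\ref{prop:1}); in $\cat{Grpd}_\mathrm{co}$ one must instead argue that $\beta_{1_\mathbb{G}}$, while not forced to be the identity, \emph{is} forced to have the specific shape of conjugation by a bisection. The cleanest route is probably to use the adjunction: apply $\beta$ naturally to comorphisms $\Sigma\mathbb{Z} \rightsquigarrow \mathbb{G}$ (equivalently, to all bisections of $\mathbb{G}$ at once via $\varepsilon_\mathbb{G}$), deduce that $g := $ the functor underlying $\beta_{1_\mathbb{G}}$ sends each bisection $\gamma$ to a bisection of the form (something)$\cdot\gamma\cdot$(something) depending only on $g$, and then specialise to the identity bisection and to ``indicator'' bisections supported near a single morphism to read off $\alpha$ and confirm the conjugation formula~\eqref{eq:3}. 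Making the bookkeeping here clean — especially handling the action of $g$ on objects versus morphisms and confirming $\tilde\alpha$ is a genuine bijection — is where the real work lies; everything after that is a routine consequence of Propositions~\ref{prop:2} and~\ref{prop:5} and the transport formulas.
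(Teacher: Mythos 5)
There is a genuine gap, and it sits exactly where you predicted it would: in the extraction of $\alpha$. The probes you propose to use --- comorphisms $\Sigma\mathbb{Z} \rightsquigarrow \mathbb{G}$, the counit $\Sigma\,\mathrm{Bis}(\mathbb{G}) \rightsquigarrow \mathbb{G}$, and the coproduct inclusions into $\mathbb{G}+1$ and $\mathbb{G}+\J$ --- all point \emph{into} $\mathbb{G}$ (or are homomorphisms rather than comorphisms). But an extended inner automorphism of $\mathbb{G}$ in $\cat{Grpd}_\mathrm{co}$ only has components at comorphisms \emph{out of} $\mathbb{G}$, and its naturality squares only relate $\beta_f$ to $\beta_{gf}$ for $g$ postcomposable with such an $f$. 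A comorphism $\Sigma\mathbb{Z} \rightsquigarrow \mathbb{G}$ is neither a component of $\beta$ nor composable after one (that would require a comorphism $\mathbb{G} \rightsquigarrow \Sigma\mathbb{Z}$, which rarely exists), so ``functoriality of $\beta$ against all such comorphisms'' is not a constraint you actually have access to. The device the paper uses instead, and which is absent from your plan, is the family of coslice projections $\pi_u \colon u/\mathbb{G} \rightarrow \mathbb{G}$: these are discrete opfibrations, hence give comorphisms $\pi_u^\ast \colon \mathbb{G} \rightsquigarrow u/\mathbb{G}$ \emph{out of} $\mathbb{G}$, and one defines $\alpha_u := \beta_{\pi_u^\ast}(1_u)$. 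The conjugation formula for $\beta_{1_\mathbb{G}}$ is then read off by comparing $\beta_{\pi_u^\ast}$ and $\beta_{\pi_v^\ast}$ along the precomposition functors $({\thg})\circ a \colon v/\mathbb{G} \rightarrow u/\mathbb{G}$.

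A second, smaller gap is in your verification that $\beta_f = (c_{f\alpha})_\ast$ for general $f$. The single naturality square with source $1_\mathbb{G}$ and $g = f$ gives $f \circ \beta_{1_\mathbb{G}} = \beta_f \circ f$, which pins down $\beta_f$ only on the ``image'' of $f$; since comorphisms (in particular $h_\ast$ for a non-full bijective-on-objects $h$) need not be epimorphisms, this does not determine $\beta_f$. The paper instead characterises the bisection of $\mathbb{H}$ associated to the restricted extended inner automorphism $\beta_{({\thg})f}$ by again probing with coslices of $\mathbb{H}$, and the bijective-on-objects case requires a pullback of $\pi_{hu}$ against $h$ together with the Beck--Chevalley Lemma~\ref{lem:2} to rewrite $q_\ast p^\ast$ as $\pi_{hu}^\ast h_\ast$. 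Your reduction via Proposition~\ref{prop:5} and the transport formulas~\eqref{eq:7} and~\eqref{eq:15} is the right skeleton, but it cannot be executed without first having the coslice-based definition of the associated bisection in hand.
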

\begin{proof}
  Suppose we are given an extended inner automorphism $\beta$ of
  $\mathbb{G}$ with components
  $(\beta_f)_\ast \colon \mathbb{H} \rightsquigarrow \mathbb{H}$. To
  prove the result, we must exhibit a unique bisection $\alpha$ of
  $\mathbb{G}$ such that $\beta_f = c_{f\alpha}$ for each $f$.

  We first construct $\alpha$. For each $u \in \mathbb{G}$, consider
  the coslice groupoid $u / \mathbb{G}$, whose objects are arrows
  $a \colon u \rightarrow v$ of $\mathbb{G}$ with domain $u$, and
  whose morphisms are commuting triangles under $u$. The obvious
  codomain projection
  $\pi_u \colon u / \mathbb{G} \rightarrow \mathbb{G}$ is a discrete
  opfibration, and so among the data of $\beta$ is an automorphism
  $\beta_{\pi_u^\ast} \colon u / \mathbb{G} \rightarrow u
  /\mathbb{G}$. Let $\alpha_u \colon u \rightarrow \tilde \alpha(u)$
  be the image of $1_u \in u / \mathbb{G}$ under $\beta_{\pi_u^\ast}$.

  Now as $\beta$ is an extended inner automorphism, the square of
  comorphisms  left below commutes. Replacing
  $(\beta_{1_\mathbb{G}})_\ast$ and $(\beta_{\pi_u^\ast})_\ast$ by
  their inverses $(\beta_{1_\mathbb{G}})^\ast$ and
  $(\beta_{\pi_u^\ast})^\ast$, this is to say that the square of
  homomorphisms to the right below commutes. (Henceforth we will make
  such reductions to homomorphisms without comment.)
  \begin{equation}\label{eq:12}
    \cd{
      {u / \mathbb{G}} \ar@{~>}[r]^-{(\beta_{\pi_u^\ast})_\ast} \ar@{<~}[d]_{(\pi_u)^\ast} &
      {u / \mathbb{G}} \ar@{<~}[d]^{(\pi_u)^\ast} & &
      {u / \mathbb{G}} \ar[r]^-{\beta_{\pi_u^\ast}} \ar[d]_{\pi_u} &
      {u / \mathbb{G}} \ar[d]^{\pi_u} \\
      {\mathbb{G}} \ar@{~>}[r]^-{(\beta_{1_\mathbb{G}})_\ast} &
      {\mathbb{G}} & &
      {\mathbb{G}} \ar[r]^-{\beta_{1_\mathbb{G}}} &
      {\mathbb{G}}
    }
  \end{equation}
  Tracing $1_u$ around this square yields
  $\beta_{1_\mathbb{G}}(u) = \tilde \alpha(u)$. Thus, since
  $\beta_{1_\mathbb{G}}$ is invertible, so is the function
  $u \mapsto \tilde \alpha(u)$; whence $(\alpha_u)_{u \in \mathbb{G}}$
  is a bisection of $\mathbb{G}$.

  We now show that
  $\beta_{1_\mathbb{G}} = c_{\alpha} \colon \mathbb{G} \rightarrow
  \mathbb{G}$. Consider a map $a \colon u \rightarrow v$ of
  $\mathbb{G}$. This induces by precomposition a functor
  $(\thg) \circ a \colon v / \mathbb{G} \rightarrow u / \mathbb{G}$,
  which fits into a commuting triangle of discrete opfibrations as left below. Since $\beta$ is
  an extended inner automorphism, this implies the commutativity of the square of
  homomorphisms to the right.
  \begin{equation*}
    \cd[@C-0.8em@!C]{
      {v / \mathbb{G}} \ar[rr]^-{(\thg) \circ a} \ar[dr]_-{\pi_v} & &
      {u / \mathbb{G}} \ar[dl]^-{\pi_u} \\ &
      {\mathbb{G}}
    } \qquad  \qquad 
    \cd{
      {v/\mathbb{G}} \ar[r]^-{\beta_{\pi_{v}^\ast}} \ar[d]_{(\thg)
        \circ a} &
      {v/\mathbb{G}} \ar[d]^{(\thg) \circ a} \\
      {u/\mathbb{G}} \ar[r]^-{\beta_{\pi_{u}^\ast}} &
      {u/\mathbb{G}}
    } 
  \end{equation*}

  Tracing $1_v$ around this square, we find that
  $\beta_{\pi_u^\ast}$ sends the object $a \in u / \mathbb{G}$
  to $\alpha_v \circ a \in u / \mathbb{G}$. Thus
  $\beta_{\pi_u^\ast}$ sends the \emph{map}
  $a \colon 1_u \rightarrow a$ of $u / \mathbb{G}$ to a map
  $f \colon \alpha_u \rightarrow \alpha_v \circ a$ of
  $u / \mathbb{G}$. Note that
  $\pi_u(f) \colon \tilde \alpha(u) \rightarrow \tilde \alpha(v)$
  satisfies $\pi_u(f) \circ \alpha_u = \alpha_v \circ a$, and so necessarily
  $\pi_u(f) = \alpha_v \circ a \circ \alpha_u^{-1}$. Thus, tracing  $a \colon 1_u \rightarrow a$ around the right square
  of~\eqref{eq:12}, we see that
  $\beta_{1_\mathbb{G}}(a \colon u \rightarrow v) = \alpha_v \circ a
  \circ \alpha_u^{-1}$, and so $\beta_{1_\mathbb{G}} = c_\alpha$ as
  claimed.

  It remains to show that $\beta_f = c_{f\alpha}$ for \emph{all}
  comorphisms $f \colon \mathbb{G} \rightsquigarrow \mathbb{H}$. For
  this, it suffices to show that the bisection associated to the
  extended inner automorphism $\beta_{(\thg)f}$ of $\mathbb{H}$ is
  $f\alpha$, since then
  $\beta_f = \beta_{(1_\mathbb{H})f} = c_{f\alpha}$ as desired. That is, we must prove:
  \begin{equation}
    \label{eq:14}
    \beta_{\pi_u^\ast f}(1_u) = (f\alpha)_u \qquad \text{for all } f
    \colon \mathbb{G} \rightsquigarrow \mathbb{H} \text{ and } u
    \in \mathbb{H}_0\rlap{ .}
  \end{equation}
  
  \textbf{Step 1}. Suppose first that $f = g^\ast$ for some
  discrete opfibration $g \colon \mathbb{H} \rightarrow \mathbb{G}$.
  We then have a commuting square of discrete opfibrations as to the
  left in:
  \begin{equation*}
    \cd[@!@-0.8em]{
      {u/\mathbb{H}} \ar[r]^-{u/g} \ar[d]_{\pi_u} &
      {gu/\mathbb{G}} \ar[d]^{\pi_{gu}} & & 
      u/\mathbb{H} \ar[r]^-{\beta_{\pi_{u}^\ast g^\ast}} \ar[d]_-{u/g}&
      u/\mathbb{H} \ar[d]^-{u/g}\\
      {\mathbb{H}} \ar[r]^-{g} &
      {\mathbb{G}} & &
      gu/\mathbb{G} \ar[r]^-{\beta_{\pi_{gu}^\ast}} &
      gu/\mathbb{G}\rlap{ ,}
    }
  \end{equation*}
  % \end{equation*}
  and so, since $\beta$ is an extended inner automorphism, a commuting
  square of homomorphisms as to the right. Tracing $1_u$ around this
  square yields
  $g(\beta_{\pi_u^\ast g^\ast}(1_u)) = \beta_{\pi^\ast_{gu}}(1_{gu}) =
  \alpha_{gu}$, and so by~\eqref{eq:15} that
  $\beta_{\pi_u^\ast g^\ast}(1_u) = (g^\ast \alpha)_u$ as required
  for~\eqref{eq:14}.

  \textbf{Step 2}. Suppose next that $f = h_\ast$ for some
  bijective-on-objects functor
  $h \colon \mathbb{G} \rightarrow \mathbb{H}$. We form the outer
  square, the pullback and the induced comparison map as to the left
  in:
  \begin{equation*}
     \cd[@-1.8em@!R@R-0.3em]{
       % g u / \mathbb{G} \ar[dddd]_-{\pi_{gu}} &&&&
       % u / \mathbb{K} \ar[dddd]_-{\pi_{u}} \ar[llll]_-{u/g}
       % &&&&
       u / \mathbb{G} \ar[dddd]_-{\pi_{u}} 
       \ar[rrrr]^-{u/h} \ar@{.>}[dr]^-{r} &&&&
       h u / \mathbb{H} \ar[dddd]^-{\pi_{hu}} &&&&
       u / \mathbb{G} \ar[rrrr]^-{\beta_{\pi_u\ast}} \ar[dd]_-{r} &&&&
       u / \mathbb{G} \ar[dd]_-{r} &&&&
       u / \mathbb{G} \ar[rrrr]^-{\beta_{\pi_u\ast}} \ar[dddd]_-{u/h} &&&&
       u / \mathbb{G} \ar[dddd]_-{u/h}
       \\ & 
       \mathbb{P} \ar[dddl]_-{p} \ar[rrru]^-{q} \pullbackcorner \\
       &&&& &&&& \mathbb{P} \ar[rrrr]^-{\beta_{p^\ast}} \ar[dd]_-{q} &&&&
       \mathbb{P} \ar[dd]_-{q} \\ \\
       %\mathbb{G} &&&& \mathbb{K} \ar[llll]_-{g} &&&&
       \mathbb{G}  \ar[rrrr]^-{h} &&&& \mathbb{H} &&&&
       hu / \mathbb{G} \ar[rrrr]^-{\beta_{q_\ast p^\ast}} &&&& hu /
       \mathbb{G} &&&&
       hu / \mathbb{G} \ar[rrrr]^-{\beta_{\pi^\ast_{hu} h_\ast}} &&&& hu / \mathbb{G}
     }
  \end{equation*}
%  of~\eqref{eq:14}.
  
  In this square, $\pi_u$ and $\pi_{hu}$ are discrete opfibrations,
  and so is $p$, since it is a pullback of $\pi_{hu}$. It follows that
  the comparison map $r$ is also a discrete opfibration. Since
  $pr = \pi_u$ and $\beta$ is an extended inner automorphism, we have
  that the top square centre above commutes. On the other hand, $q$ is
  bijective-on-objects as a pullback of $f$, and so, since $\beta$ is
  an extended inner automorphism, we have that the bottom square
  centre above commutes.

  By applying Lemma~\ref{lem:2} to the pullback square left above, we
  have $q_\ast p^\ast = \pi_{hu}^\ast h_\ast$, and so the composite of
  the two centre squares is equally the square right above. Tracing
  the object $1_u$ around both sides yields
  $\beta_{\pi_{hu}^\ast h_\ast}(1_{hu}) = h(\alpha_u)$, and so
  by~\eqref{eq:7} we conclude that
  $\beta_{\pi_{hu}^\ast h_\ast}(1_{u}) = (h_\ast\alpha)_u$ as required
  for~\eqref{eq:14}.

%  as desired.
  % This completes the proof of~\eqref{eq:13}.

  \textbf{Step 3}. We now prove for a general
  $f \colon \mathbb{G} \rightsquigarrow \mathbb{H}$ that the bisection
  associated to the inner automorphism $\beta_{(\thg)f}$ of
  $\mathbb{H}$ is $f\alpha$. We first apply Proposition~\ref{prop:5} to
  decompose $f$ as
  $h_\ast g^\ast \colon \mathbb{G} \rightsquigarrow \mathbb{K}
  \rightsquigarrow \mathbb{H}$. By Step 1 applied to $\beta$ and $g$,
  the bisection associated to the inner automorphism
  $\beta_{(\thg)g^\ast}$ of $\mathbb{K}$ is $g^\ast \alpha$. Now by
  Step 2 applied to $\beta_{(\thg)g^\ast}$ and $h$, the bisection
  associated to the inner automorphism $\beta_{(\thg)h_\ast g^\ast} =
  \beta_{(\thg)f}$ of
  $\mathbb{K}$ is $h_\ast g^\ast \alpha = f\alpha$, as required.
\end{proof}

We have thus proved the theorem stated in the introduction. In fact,
we can do slightly better. The extended inner automorphisms of any
object in any category form a group under the operation of
composition. We noted above that the bisections of a groupoid also
form a group. It is easily seen that these two group structures are
related by the equation
$c_\beta \circ c_\alpha = c_{\beta \cdot \alpha}$, and so we have:
\begin{Thm}
  \label{thm:1}
  The group of extended inner automorphisms of
  $\mathbb{G} \in \cat{Grpd}_\mathrm{co}$ is isomorphic to the group
  $\mathrm{Bis}(\mathbb{G})$. The extended inner automorphism
  corresponding to $\alpha \in \mathrm{Bis}(\mathbb{G})$ is given by the family of
  automorphisms
  $\bigl((c_{f\alpha})_\ast \colon \mathbb{H} \rightsquigarrow
  \mathbb{H}\bigr)$ as $f$ ranges over comorphisms $\mathbb{G} \rightsquigarrow \mathbb{H}$.
\end{Thm}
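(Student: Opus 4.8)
The plan is to read off Theorem~\ref{thm:1} from Propositions~\ref{prop:2} and~\ref{prop:3}, the only genuinely new ingredient being a short verification that the bijection they establish is compatible with the two group structures. Write $\mathrm{EIA}(\mathbb{G})$ for the group of extended inner automorphisms of $\mathbb{G}$ in $\cat{Grpd}_\mathrm{co}$, whose multiplication is the componentwise composition of families $(\beta_f)$, and let
\[
  \Phi \colon \mathrm{Bis}(\mathbb{G}) \longrightarrow \mathrm{EIA}(\mathbb{G})
\]
be the assignment sending a bisection $\alpha$ to the family $\bigl((c_{f\alpha})_\ast \mid f \colon \mathbb{G} \rightsquigarrow \mathbb{H}\bigr)$. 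Proposition~\ref{prop:2} says $\Phi$ is well-defined, and Proposition~\ref{prop:3} says it is a bijection; so it suffices to show that $\Phi$ is a homomorphism of groups, after which the theorem is immediate.

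First I would isolate the elementary identity
\[
  c_\beta \circ c_\alpha = c_{\beta \cdot \alpha}
\]
for bisections $\alpha, \beta$ of a single groupoid: using~\eqref{eq:3} and the definition of the product $\beta \cdot \alpha$, one checks directly that both sides act on objects by $u \mapsto \tilde\beta(\tilde\alpha(u))$ and send a map $x \colon u \to v$ to $\beta_{\tilde\alpha(v)} \circ \alpha_v \circ x \circ \alpha_u^{-1} \circ \beta_{\tilde\alpha(u)}^{-1}$. Then I would invoke two pieces of functoriality already to hand: that the pushforward $\alpha \mapsto f\alpha$ is a \emph{group} homomorphism $\mathrm{Bis}(\mathbb{G}) \to \mathrm{Bis}(\mathbb{H})$ for every comorphism $f \colon \mathbb{G} \rightsquigarrow \mathbb{H}$ --- this being the functor $\mathrm{Bis}$ of Proposition~\ref{prop:4} (and the discussion following it, which identifies its action on morphisms with Definition~\ref{def:2}) applied to $f$; and that $(g \circ h)_\ast = g_\ast \circ h_\ast$ for composable bijective-on-objects homomorphisms, which is immediate from the construction of $(\thg)_\ast$ and from composition in $\cat{Grpd}_\mathrm{co}$.

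Combining these, for $\alpha, \beta \in \mathrm{Bis}(\mathbb{G})$ and any $f \colon \mathbb{G} \rightsquigarrow \mathbb{H}$ the $f$-component of $\Phi(\beta \cdot \alpha)$ is
\[
  (c_{f(\beta \cdot \alpha)})_\ast = (c_{(f\beta) \cdot (f\alpha)})_\ast = (c_{f\beta} \circ c_{f\alpha})_\ast = (c_{f\beta})_\ast \circ (c_{f\alpha})_\ast\text{,}
\]
which is precisely the $f$-component of $\Phi(\beta) \circ \Phi(\alpha)$; hence $\Phi(\beta \cdot \alpha) = \Phi(\beta) \circ \Phi(\alpha)$, so $\Phi$ is a group isomorphism and Theorem~\ref{thm:1} follows. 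I do not anticipate any real obstacle: everything reduces to the one displayed calculation together with functoriality statements proved earlier, and the only thing demanding care is the bookkeeping around the object-reversal built into $(\thg)_\ast$ and the order in which the families $(\beta_f)$ compose, so as to land on $\Phi(\beta \cdot \alpha) = \Phi(\beta) \circ \Phi(\alpha)$ with the factors on the correct side rather than transposed.
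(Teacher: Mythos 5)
Your proposal is correct and follows the same route as the paper: the paper derives Theorem~\ref{thm:1} from Propositions~\ref{prop:2} and~\ref{prop:3} together with the observation that $c_\beta \circ c_\alpha = c_{\beta \cdot \alpha}$, leaving the compatibility of the group structures as "easily seen". Your writeup merely fills in the details the paper omits (in particular, that $\alpha \mapsto f\alpha$ is a group homomorphism and that $(\thg)_\ast$ is functorial), and does so correctly.
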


\section{Generalisations and further perspectives}
\label{sec:gener-comm}

\subsection{Topological and Lie groupoids}
\label{sec:topol-lie-group}

As mentioned in the introduction, bisections show up frequently in the
study of Lie and topological groupoids. It is therefore natural to ask
if our results generalise to those settings. The answer is yes. As
the adaptations in the two cases are so similar, we concentrate on the
topological~one.

First we must adapt the basic notions. For a topological groupoid
$\mathbb{G}$, we restrict attention to \emph{continuous}
bisections~$\alpha$: those for which the assignation
$u \mapsto \alpha_u$ is continuous as a map
$\mathbb{G}_0 \rightarrow \mathbb{G}_1$. This implies, easily, that
the associated conjugation homomorphism
$c_\alpha \colon \mathbb{G} \rightarrow \mathbb{G}$ is a continuous
map of topological groupoids. We should like to identify these
$c_\alpha$'s as the extended inner automorphisms of $\mathbb{G}$ in a
suitable category.

The morphisms of this category will be comorphisms
$f \colon \mathbb{G} \rightsquigarrow \mathbb{H}$ between topological
groupoids which are \emph{continuous}, in the sense of rendering
continuous the following maps of spaces:
\begin{align*}
  \mathbb{H}_0 & \rightarrow \mathbb{G}_0 &
  \mathbb{G}_1 \times_{\mathbb{G}_0} \mathbb{H}_0 & \rightarrow
  \mathbb{H}_1\\
  u & \mapsto fu & (a,u) & \mapsto f(a)_u\rlap{ ;}
\end{align*}
here, the fibre product
$\mathbb{G}_1 \times_{\mathbb{G}_0} \mathbb{H}_0$ is taken along the
source map $s \colon \mathbb{G}_1 \rightarrow \mathbb{G}_0$ and the
action on objects $f \colon \mathbb{H}_0 \rightarrow \mathbb{G}_0$.
Much like before, we obtain continuous comorphisms
$\mathbb{G} \rightsquigarrow \mathbb{H}$ from continuous functors
$\mathbb{G} \rightarrow \mathbb{H}$ which are homeomorphic-on-objects;
and from functors $\mathbb{H} \rightarrow \mathbb{G}$ which are
\emph{continuous discrete opfibrations}, meaning that the operation of
forming the unique lifting $f(a)_u \colon u \rightarrow \tilde f(a,u)$
of a map $a \colon fu \rightarrow v$ is a continuous map
$\mathbb{G}_1 \times_{\mathbb{G}_0} \mathbb{H}_0 \rightarrow
\mathbb{H}_1$. As in Proposition~\ref{prop:5}, every continuous
comorphism arises by composing ones of these two special kinds.

In this situation, we also have an analogue of
Proposition~\ref{prop:4}: the functor
$\Sigma \colon \cat{Grp} \rightarrow \cat{Top\G rpd}_\mathrm{co}$
embedding each (discrete!) group $G$ as a one-object discrete
topological groupoid has a right adjoint, sending a topological
groupoid $\mathbb{G}$ to its discrete group of continuous bisections
$\mathrm{Bis}(\mathbb{G})$\footnote{Note that any attempt to construct
  a right adjoint to the embedding
  $\cat{Top\G rp} \rightarrow \cat{Top\G rpd}_\mathrm{co}$ would fall
  foul of the failure of topological spaces to be cartesian closed.}.
In particular, continuous bisections of a topological groupoid can be
transported along continuous comorphisms, with the same formulae as
before. Using this Proposition~\ref{prop:2} carries over,
\emph{mutatis mutandis}, showing that every continuous bisection of
$\mathbb{G} \in \cat{Top\G rpd}_\mathrm{co}$ induces an extended inner
automorphism.

All that remains is to adapt the proof of Proposition~\ref{prop:3},
showing that every extended inner automorphism $\beta$ of $\mathbb{G}$
arises in this manner. All of the constructions in this proof continue
to work in the topological context, and so we can conclude immediately
that $\beta$ must be of the form $\beta_f = c_{f \alpha}$ for a
unique, but not necessarily continuous, bisection $\alpha$ of
$\mathbb{G}$. To prove continuity, we consider the
\emph{d\'ecalage}~\cite{Illusie1972Complexe} of $\mathbb{G}$. This is
the topological groupoid $\mathrm{Dec}(\mathbb{G})$ whose underlying
topological graph is given by
\begin{equation*}
  \cd{
    \mathbb{G}_1 \,{}_t \!\times_s \mathbb{G}_1 \ar@<3pt>[r]^-{\pi_1}
    \ar@<-3pt>[r]_-{\mu} & \mathbb{G}_1
  }\rlap{ ,}
\end{equation*}
where $\mu$ is the composition map of $\mathbb{G}$. The composition
and units of $\mathrm{Dec}(\mathbb{G})$ itself are determined by
requiring that its underlying discrete groupoid be the disjoint union
of the coslice categories $u / \mathbb{G}$. There is a continuous
discrete opfibration
$\pi \colon \mathrm{Dec}(\mathbb{G}) \rightarrow \mathbb{G}$ which
projects onto the codomain; and for each $u \in \mathbb{G}_0$ this
fits into a commuting triangle as to the left~in
\begin{equation*}
  \cd[@C-1.3em@!C]{
    {u / \mathbb{G}} \ar[rr]^-{\iota} \ar[dr]_-{\pi_u} & &
    {\mathrm{Dec}(\mathbb{G})} \ar[dl]^-{\pi} \\ &
    {\mathbb{G}}
  } \qquad \qquad 
  \cd{
    {u/\mathbb{G}} \ar[r]^-{\beta_{\pi_{u}^\ast}} \ar[d]_{\iota} &
    {u/\mathbb{G}} \ar[d]^{\iota} \\
    {\mathrm{Dec}(\mathbb{G})} \ar[r]^-{\beta_{\pi^\ast}} &
    {\mathrm{Dec}(\mathbb{G})}\rlap{ .}
  } 
\end{equation*}
It follows that each square as right above is commutative in
$\cat{Top\G rpd}$; so in particular,
$\beta_{\pi^\ast}(1_u) = \beta_{\pi_u^\ast}(1_u) = \alpha_u$ for each
$u \in \mathbb{G}_0$. This shows that composing the continuous
identities map
$1_{(\thg)} \colon \mathbb{G}_0 \rightarrow \mathbb{G}_1$ with the
continuous map $\mathbb{G}_1 \rightarrow \mathbb{G}_1$ giving the
action on objects of $\beta_{\pi^\ast}$ yields the assignation
$u \mapsto \alpha_u$---which is thus continuous, as desired. We thus
obtain:
\begin{Thm}
  The group of extended inner automorphisms of
  $\mathbb{G} \in \cat{TopGrpd}_\mathrm{co}$ is isomorphic to the
  group of continuous bisections $\mathrm{Bis}(\mathbb{G})$, under the
  same correspondence as in Theorem~\ref{thm:1}.
\end{Thm}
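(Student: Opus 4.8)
The plan is to mirror the structure of the proof in the discrete (i.e.\ ordinary groupoid) case, which has already been carried out in Theorem~\ref{thm:1} via Propositions~\ref{prop:2} and~\ref{prop:3}, and then add exactly one new ingredient to handle continuity. More precisely, I would first observe that every piece of data used in the discrete argument --- the coslice groupoids $u/\mathbb{G}$, the codomain projections $\pi_u$, the precomposition functors $({\thg})\circ a$, the decomposition of a comorphism into a discrete opfibration followed by a bijective-on-objects functor (Proposition~\ref{prop:5}), and the Beck--Chevalley lemma (Lemma~\ref{lem:2}) --- has an evident topological refinement, using the continuous discrete opfibrations and homeomorphic-on-objects functors introduced above. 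Running the proofs of Propositions~\ref{prop:2} and~\ref{prop:3} verbatim in $\cat{TopGrpd}_\mathrm{co}$ then shows that every extended inner automorphism $\beta$ has the form $\beta_f = c_{f\alpha}$ for a \emph{unique} bisection $\alpha$ of the underlying discrete groupoid of $\mathbb{G}$, and conversely that every \emph{continuous} bisection gives rise to an extended inner automorphism. The only gap is that the $\alpha$ produced by the forward direction is, a priori, merely a set-theoretic bisection, not a continuous one.

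To close that gap I would use the d\'ecalage $\mathrm{Dec}(\mathbb{G})$ of Illusie. The key structural facts I need are: $\mathrm{Dec}(\mathbb{G})$ is a topological groupoid whose underlying discrete groupoid is the disjoint union $\coprod_{u\in\mathbb{G}_0} u/\mathbb{G}$; the codomain projection $\pi\colon\mathrm{Dec}(\mathbb{G})\rightsquigarrow\mathbb{G}$ is a continuous discrete opfibration; and for each fixed $u$ the inclusion $\iota\colon u/\mathbb{G}\hookrightarrow\mathrm{Dec}(\mathbb{G})$ sits in a commuting triangle $\pi\circ\iota = \pi_u$ of discrete opfibrations. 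Feeding this triangle into the extended-inner-automorphism condition gives a commuting square relating $\beta_{\pi_u^\ast}$ and $\beta_{\pi^\ast}$, and tracing $1_u$ around it yields $\beta_{\pi^\ast}(1_u) = \beta_{\pi_u^\ast}(1_u) = \alpha_u$ for \emph{every} $u$ simultaneously. Since $\beta_{\pi^\ast}$ is a continuous automorphism of $\mathrm{Dec}(\mathbb{G})$, its action on arrows is a continuous map $\mathbb{G}_1\,{}_t\!\times_s\mathbb{G}_1 \to \mathbb{G}_1\,{}_t\!\times_s\mathbb{G}_1$, and restricting to the identities section (the continuous map $1_{({\thg})}\colon\mathbb{G}_0\to\mathbb{G}_1$, viewed inside the d\'ecalage) exhibits $u\mapsto\alpha_u$ as a composite of continuous maps. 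Hence $\alpha$ is continuous.

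Finally I would record that the group structures match exactly as in Theorem~\ref{thm:1}: the identity $c_\beta\circ c_\alpha = c_{\beta\cdot\alpha}$ is purely algebraic and unaffected by topology, and the continuous bisections form a subgroup $\mathrm{Bis}(\mathbb{G})$ of all bisections (a product or inverse of continuous bisections is continuous, since source, target and composition of $\mathbb{G}$ are continuous). Combining this with the refined Propositions~\ref{prop:2} and~\ref{prop:3} and the continuity argument above gives the asserted group isomorphism under the same correspondence as before.

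I expect the main obstacle to be purely expository rather than mathematical: it lies in checking, cleanly and without excessive repetition, that every construction in the (already somewhat intricate) proof of Proposition~\ref{prop:3} --- especially the pullback-and-comparison-map manoeuvre of Step~2, where one must know that a pullback of a continuous discrete opfibration along a continuous functor is again a continuous discrete opfibration, and that the comparison map $r$ inherits this property --- survives the passage to $\cat{TopGrpd}$. None of these verifications is hard, but stating precisely which topological analogues of Proposition~\ref{prop:5} and Lemma~\ref{lem:2} are being invoked, and confirming that the coslice and d\'ecalage constructions genuinely land in $\cat{TopGrpd}$ with the claimed (co)fibration properties, is where care is required. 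The continuity-of-$\alpha$ step via the d\'ecalage, by contrast, is short and is the genuinely new idea beyond the discrete case.
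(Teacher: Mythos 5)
Your proposal is correct and follows essentially the same route as the paper: run Propositions~\ref{prop:2} and~\ref{prop:3} in the topological setting to obtain a unique, a priori merely set-theoretic, bisection $\alpha$, and then establish continuity of $u \mapsto \alpha_u$ via the d\'ecalage, using the commuting triangle $\pi \circ \iota = \pi_u$ and the fact that $\beta_{\pi^\ast}$ is a continuous automorphism of $\mathrm{Dec}(\mathbb{G})$ whose object space is $\mathbb{G}_1$ (so the relevant continuous map is its action on \emph{objects}, a map $\mathbb{G}_1 \to \mathbb{G}_1$, composed with the identities section $\mathbb{G}_0 \to \mathbb{G}_1$). This matches the paper's argument, including the identification of the d\'ecalage step as the one genuinely new ingredient.
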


\subsection{Internal groupoids}
\label{sec:internal-groupoids}

Topological and Lie groupoids are particular examples of
\emph{internal groupoids} in a category $\C$. % , when $\C$ is either the
% category of topological spaces, or the category of smooth manifolds.
It is therefore natural to ask if our results generalise further
to groupoids internal to any category $\C$. The answer is no.

To see this, consider the category $\cat{Set}^{\mathbb{Z}_2}$ whose
objects are sets $X$ endowed with an involution
$\tau \colon X \rightarrow X$, and whose maps are equivariant
functions (i.e., ones commuting with the involutions). A groupoid
internal to $\cat{Set}^{\mathbb{Z}_2}$ is an ordinary groupoid
$\mathbb{G}$ with a (strict) involution
$\tau \colon \mathbb{G} \rightarrow \mathbb{G}$; internal functors and
comorphisms are just ordinary functors and comorphisms which commute
with the involutions.

Now if $(\mathbb{G}, \tau)$ is an involutive groupoid, then the
functor $\tau$ is easily seen to be equivariant
$(\mathbb{G}, \tau) \rightarrow (\mathbb{G}, \tau)$; it follows that
$(\mathbb{G}, \tau)$ has an extended inner automorphism $\beta$ whose
component at any
$(\mathbb{G}, \tau) \rightsquigarrow (\mathbb{H}, \sigma)$ is
$\sigma_\ast \colon (\mathbb{H}, \sigma) \rightsquigarrow (\mathbb{H},
\sigma)$. However, this $\beta$ need not arise from any bisection
$\alpha$ of $\mathbb{G}$. For example, if $\mathbb{G}$ is the discrete
groupoid on two objects, and $\tau$ is the swap map, then
$(\mathbb{G}, \tau)$ has no non-identity bisections, and yet the $\beta$
defined above is
not the identity.

The reason that things work differently in this case is really that
objects in the indexing category $\mathbb{Z}_2$ can have their own
non-trivial extended inner automorphisms. A more general formulation
of our results would have to take this into account---but, lacking as
we do any compelling reasons for developing such a generalisation, we
have not pursued this further.

\subsection{Categories}
\label{sec:categories}

Another obvious direction of generalisation involves replacing
groupoids everywhere by categories. There is not so much to say here;
everything works without fuss. Comorphisms are defined exactly as
before, and factorise in exactly the same way. For bisections, we must
add the requirement that each map
$\alpha_u \colon u \rightarrow \tilde \alpha(u)$ is invertible, and
can then induce conjugation automorphisms in exactly the same way. Once
again, bisections transport along comorphisms, with this now being
evidenced by an an adjunction
$\Sigma \colon \cat{Grp} \leftrightarrows \cat{Cat}_\mathrm{co} \colon
\mathrm{Bis}$.

Proposition~\ref{prop:2} continues to work; and the only adaptation
required in Proposition~\ref{prop:3} is at the very start. Given an
extended inner automorphism $\beta$ of a category $\mathbb{C}$, with
components $(\beta_f)_\ast$, we have as before the collection of maps
$\alpha_u = \beta_{\pi_u^\ast}(1_u) \colon u \rightarrow \tilde
\alpha(u)$. The argument showing the assignation
$u \mapsto \tilde \alpha(u)$ is invertible still holds; but we must
now also show that each $\alpha_u$ is invertible. For this, we first
show as before that the automorphism
$\beta_{\pi_u^\ast} \colon u / \mathbb{C} \rightarrow u / \mathbb{C}$
is given on objects by
$\beta_{\pi_u^\ast}(a \colon u \rightarrow v) = \alpha_v \circ a$.
Being an automorphism, there is in particular some such $a$ for which
$\alpha_v \circ a = 1_u$. So we have a commuting triangle as to the
left below in $u / \mathbb{C}$. Applying $\beta_{\pi_u^\ast}$ yields a
commuting triangle as to the right.
\begin{equation*}
  \cd[@-0.5em]{
    & {a} \ar@{<-}[dl]_-{a} \ar[dr]^-{\alpha_v} & & &
    & {1_u} \ar@{<-}[dl]_-{f} \ar[dr]^-{g} &
    \\
    {1_u} \ar[rr]_-{1_u} & & {1_u} & &
    {\alpha_u} \ar[rr]_-{1_{\tilde \alpha(u)}} & & {\alpha_u}
  } 
\end{equation*}
Since $g \colon 1_u \rightarrow \alpha_u$ we have
$g = g \circ 1_u = \alpha_u$; since
$f \colon \alpha_u \rightarrow 1_u$ we have $f \circ \alpha_u = 1_u$;
and since the triangle commutes we have $\alpha_u \circ f = 1_u$. So
$f$ is an inverse for $\alpha_u$. The remainder of the argument now
follows exactly as before, and so we have:
\begin{Thm}
  The group of extended inner automorphisms of
  $\mathbb{C} \in \cat{Cat}_\mathrm{co}$ is isomorphic to the group of 
  bisections $\mathrm{Bis}(\mathbb{C})$, under the same correspondence
  as in Theorem~\ref{thm:1}.
\end{Thm}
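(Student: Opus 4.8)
The plan is to follow the proof of Theorem~\ref{thm:1} line by line, keeping careful track of the one and only place where the groupoid hypothesis is genuinely used. First I would put the preliminary scaffolding in place for categories: a comorphism $f \colon \mathbb{C} \rightsquigarrow \mathbb{D}$ of categories is given by exactly the data and axioms of Definition~\ref{def:1}; the factorisation of Proposition~\ref{prop:5} through a discrete opfibration followed by a bijective-on-objects functor goes through verbatim (composition in the intermediate category remains well-defined by axioms~(ii) and~(iii)); the Beck--Chevalley Lemma~\ref{lem:2} and the translation Lemma~\ref{lem:4} between invertible comorphisms and invertible functors are untouched; and the adjunction $\Sigma \colon \cat{Grp} \leftrightarrows \cat{Cat}_\mathrm{co} \colon \mathrm{Bis}$ replaces Proposition~\ref{prop:4}, once we build into the notion of bisection of a category the requirement that each $\alpha_u \colon u \rightarrow \tilde\alpha(u)$ be invertible. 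From that adjunction the pushforward $f\alpha$ of Definition~\ref{def:2} and the explicit formulae~\eqref{eq:7} and~\eqref{eq:15} carry over, and the conjugation functor $c_\alpha$ is still defined by~\eqref{eq:3}.

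With the scaffolding in place, the forward direction---that every bisection of $\mathbb{C}$ induces an extended inner automorphism, as in Proposition~\ref{prop:2}---needs no change at all, since its verification uses only functoriality and the transport formulae. So the real content is the converse, Proposition~\ref{prop:3}. I would run that proof exactly as written: for each object $u$ set $\alpha_u = \beta_{\pi_u^\ast}(1_u)$ using the discrete opfibration $\pi_u \colon u/\mathbb{C} \rightarrow \mathbb{C}$; the naturality-and-Beck--Chevalley arguments establishing that $\tilde\alpha$ is a bijection, that $\beta_{1_\mathbb{C}}$ acts on objects by $\tilde\alpha$, that $\beta_{\pi_u^\ast}$ acts on the objects of $u/\mathbb{C}$ by $a \mapsto \alpha_v \circ a$, that $\beta_{1_\mathbb{C}} = c_\alpha$, and finally Steps~1--3 giving $\beta_f = c_{f\alpha}$ for every comorphism $f$, all survive unchanged, because none of them ever inverts an arrow of $\mathbb{C}$ itself---every inversion occurring there is of a component of a \emph{bona fide} bisection.

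The single genuinely new step, and the only obstacle, is to insert---right after the construction of $\alpha$ and the proof that $\tilde\alpha$ is a bijection---a verification that each $\alpha_u$ is invertible, since in $\cat{Cat}$ the bijectivity of $\tilde\alpha$ no longer forces this. The trick is to use that $\beta_{\pi_u^\ast}$ is an \emph{automorphism} of $u/\mathbb{C}$, acting on objects by $a \mapsto \alpha_v \circ a$: being surjective on objects, it has some object $a \colon u \rightarrow v$ with $\alpha_v \circ a = 1_u$, and hence a commuting triangle $1_u \xrightarrow{a} a \xrightarrow{\alpha_v} 1_u$ in $u/\mathbb{C}$. Applying $\beta_{\pi_u^\ast}$ turns this into a commuting triangle $\alpha_u \xrightarrow{f} 1_u \xrightarrow{g} \alpha_u$ in $u/\mathbb{C}$; decoding what it means for $f$ and $g$ to be maps under $u$ (so $g = \alpha_u$ and $f \circ \alpha_u = 1_u$) together with commutativity of the triangle ($\alpha_u \circ f = 1_{\tilde\alpha(u)}$) exhibits $f$ as a two-sided inverse for $\alpha_u$. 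Once $\alpha$ is known to be a genuine bisection, the remainder of Proposition~\ref{prop:3} goes through as in the groupoid case, giving $\beta_f = c_{f\alpha}$ for a unique $\alpha$; and the statement about group structures follows, exactly as in the passage preceding Theorem~\ref{thm:1}, from the identity $c_\beta \circ c_\alpha = c_{\beta \cdot \alpha}$.
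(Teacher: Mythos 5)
Your proposal is correct and follows essentially the same route as the paper: identical scaffolding, the observation that Proposition~\ref{prop:2} carries over unchanged, and the same single new step in Proposition~\ref{prop:3} of proving each $\alpha_u$ invertible by choosing an object $a$ of $u/\mathbb{C}$ with $\alpha_v \circ a = 1_u$ and applying $\beta_{\pi_u^\ast}$ to the resulting commuting triangle. Nothing further is needed.
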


\subsection{Inverse semigroups}
\label{sec:inverse-semigroups}

Our main results seem to diverge from the pattern for the
computation of extended inner automorphism groups
in~\cite{Bergman2012An-inner, Hofstra2018Isotropy}. In this prior
work, the categories under consideration have as objects, the models
of an equational algebraic theory $\mathbb{T}$, and as morphisms, the
obvious structure-preserving maps. This allows the extended inner
automorphisms of a $\mathbb{T}$-model $X$ to be characterised via
universal algebra: they correspond to those invertible unary
operations of the diagram theory $\mathbb{T}_X$\footnote{i.e., the
  theory obtained by extending $\mathbb{T}$ with new constants for
  each element of $X$, and new equations describing the value of each
  $\mathbb{T}$-operation on those constants.} which
% we consider the diagram theory $\mathbb{T}_X$ which extends
% $\mathbb{T}$ with new constants for each element of $X$, plus
% equations describing the value of each $\mathbb{T}$-operation on those
% constants. The group of inner automorphisms of $X$ is now precisely
% the group of those invertible unary operations of $\mathbb{T}_X$ which
\emph{commute} with each $\mathbb{T}$-operation.

By contrast, our main result concerns the category of groupoids and
comorphisms; and while the objects of this category are algebraic in
nature---they are the models of an \emph{essentially-algebraic} theory
in the sense of~\cite{Freyd1972Aspects}---the morphisms are \emph{not}
the obvious structure-preserving ones (which led only to our negative
Proposition~\ref{prop:1}). This means that our argument for computing
the extended inner automorphisms is necessarily different in nature.

In fact, there is a way of reconciling our results with those
of~\cite{Bergman2012An-inner, Hofstra2018Isotropy}: we adopt a
different perspective on groupoid structure in which the comorphisms
\emph{are} the natural structure-preserving maps. More precisely, we
take as the basic data of a groupoid not its objects and morphisms,
but its \emph{partial} bisections:

\begin{Defn}
  \label{def:7}
  A \emph{partial bisection} $\alpha$ of a groupoid $\mathbb{G}$
  comprises subsets $s(\alpha)$ and $t(\alpha)$ of $\mathbb{G}_0$
  together with an $s(\alpha)$-indexed family of morphisms
  $\bigl(\alpha_u \colon u \rightarrow \tilde \alpha(u)\bigr)$ with the property
  that the assignation $u \mapsto \tilde \alpha(u)$ is a
  bijection $s(\alpha) \rightarrow t(\alpha)$.
\end{Defn}
The set $\mathrm{PBis}(\mathbb{G})$ of partial bisections of a
groupoid $\mathbb{G}$ can be endowed with the structure of a
\emph{pseudogroup}---a special kind of inverse semigroup---and this
structure allows $\mathrm{PBis}(\mathbb{G})$ to represent $\mathbb{G}$
faithfully. This fits into the pattern of a well-known correspondence
between \'etale topological groupoids and pseudogroups, detailed, for
example, in~\cite{Resende2006Lectures,Lawson2013Pseudogroups}. A fact
about this correspondence which does not appear to have been noted
previously is that it equates the natural structure-preserving maps of
pseudogroups with the \emph{comorphisms} between the corresponding
groupoids\footnote{Indeed, in~\cite{Resende2006Lectures}, the
  correspondence is not defined on morphisms; while
  in~\cite{Lawson2013Pseudogroups}, the morphisms considered are on
  one side, the \emph{discrete opfibrations} of groupoids, and on the
  other, a slightly delicate class of morphisms between
  pseudogroups.}. Thus, we may consider our result about groupoids and
comorphisms instead as a result about pseudogroups and their
structure-preserving morphisms, so fitting it in
% \footnote{Broadly
%   speaking, since the category of pseudogroups is algebraic in a
%   fairly loose sense: it is monadic over the variety of inverse
%   monoids for a monad without rank.}
to the general pattern
established in~\cite{Bergman2012An-inner, Hofstra2018Isotropy}.

To make the preceding claims more precise, we now define the category
$\cat{Ps\G rp}$ of pseudogroups, and sketch a proof that the
assignation $\mathbb{G} \mapsto \mathrm{PBis}(\mathbb{G})$ yields a
full embedding of $\cat{Grpd}_\mathrm{co}$ into $\cat{Ps\G rp}$.

\begin{Defn}
  \label{def:5}
  An \emph{inverse monoid} is a unital semigroup $M$ such that, for
  every $m \in M$, there is a unique $m^\ast \in M$ with
  $mm^\ast m = m$ and $m^\ast mm^\ast = m^\ast$. The \emph{natural
    partial order} $\leqslant$ and the \emph{compatibility relation}
  $\sim$ on $M$ are given by
  \begin{align*}
   m \leqslant n \qquad & \text{iff} \qquad mn^\ast n = n  \\ m \sim
   n \qquad & \text{iff} \qquad mn^\ast \text{ and } n^\ast m \text{ are idempotent.}
 \end{align*}
 A (abstract) \emph{pseudogroup} is an inverse monoid $M$ such that
 any family $S \subseteq M$ of pairwise-compatible elements admits a
 join $\bigvee S$ (with respect to $\leqslant$) which is preserved by
 each function $m \cdot (\thg) \colon M \rightarrow M$ and
 $(\thg) \cdot m \colon M \rightarrow M$. Pseudogroups form a category
 $\cat{Ps\G rp}$ wherein maps are monoid homomorphisms that preserve
 joins of compatible families.
\end{Defn}
\begin{Ex}
  \label{ex:1}
  For any groupoid $\mathbb{G}$, the set of partial bisections
  $\mathrm{PBis}(\mathbb{G})$ is a pseudogroup under the
  binary operation
  $\beta, \alpha \mapsto \beta \cdot \alpha$, where 
  $\beta \cdot \alpha$ has
\begin{equation*}
  s(\beta \cdot \alpha) = s(\alpha) \cap \tilde
  \alpha^{-1}\bigl(s(\beta)\bigr) \qquad
  t(\beta \cdot \alpha) = \tilde \beta\bigl(t(\alpha)\bigr) \cap t(\beta)
\end{equation*}
and components
$(\beta \cdot \alpha)_u = \beta_{\tilde \alpha(u)} \circ \alpha_u
\colon u \rightarrow \tilde \beta(\tilde \alpha(u))$. The unit for
this operation is the identity bisection $\mathbf{1}$, and the partial
inverse $\alpha^\ast$ of $\alpha$ has $s(\alpha^\ast) = t(\alpha)$,
$t(\alpha^\ast) = s(\alpha)$ and components determined by
$(\alpha^\ast)_{\tilde \alpha(u)} = (\alpha_u)^{-1}$.

Two partial bisections $\alpha, \beta$ are compatible if
$\alpha_u = \beta_u$ for all $u \in s(\alpha) \cap s(\beta)$, while
$\alpha \leqslant \beta$ if $\alpha \sim \beta$ and
$s(\alpha) \subseteq s(\beta)$. The join $\alpha$ of a
pairwise-compatible family of partial bisections
$(\alpha^{i} : i \in S)$ has $s(\alpha) = \bigcup_i s(\alpha^i)$,
$t(\alpha) = \bigcup_i t(\alpha^i)$ and components
$\alpha_u = \alpha^i_u$, for any $i \in S$ with
$u \in s(\alpha^i)$.
\end{Ex}

\begin{Prop}
  \label{prop:6}
  The assignation $\mathbb{G} \mapsto \mathrm{PBis}(\mathbb{G})$ is
  the action on objects of a full embedding of categories
  $\cat{Grpd}_\mathrm{co} \rightarrow \cat{Ps\G rp}$.
\end{Prop}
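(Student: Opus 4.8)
The plan is to show that $\mathrm{PBis}$ is functorial, faithful, and full. For \textbf{functoriality}, I would first check that a comorphism $f \colon \mathbb{G} \rightsquigarrow \mathbb{H}$ induces a map $\mathrm{PBis}(f) \colon \mathrm{PBis}(\mathbb{G}) \to \mathrm{PBis}(\mathbb{H})$ of pseudogroups. This should extend Definition~\ref{def:2}: given a partial bisection $\alpha$ of $\mathbb{G}$ with source $s(\alpha)$ and target $t(\alpha)$, one sets $s(f\alpha) = f^{-1}(s(\alpha))$ (i.e.\ those $u \in \mathbb{H}_0$ with $fu \in s(\alpha)$), and for such $u$ puts $(f\alpha)_u = f(\alpha_{fu})_u \colon u \to \tilde f(\alpha_{fu}, u)$, with $t(f\alpha)$ the image of $s(f\alpha)$ under $u \mapsto \tilde f(\alpha_{fu}, u)$. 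One must check (a) this assignment is well-defined — in particular that $u \mapsto \tilde f(\alpha_{fu}, u)$ is a bijection $s(f\alpha) \to t(f\alpha)$, which follows from axiom (i) together with the fact that $\tilde\alpha$ is a bijection on its domain; (b) it respects the multiplication $\beta \cdot \alpha$, the unit $\mathbf 1$, and the involution $\alpha \mapsto \alpha^\ast$, each of which is a direct computation using comorphism axioms (ii) and (iii); and (c) it preserves joins of compatible families, which is immediate from the pointwise description of joins in Example~\ref{ex:1} together with the pointwise description of $f\alpha$. By Proposition~\ref{prop:5} one could alternatively verify functoriality separately on comorphisms of the form $g_\ast$ (bijective-on-objects functors) and $g^\ast$ (discrete opfibrations), using the explicit formulae~\eqref{eq:7} and~\eqref{eq:15}, which may be cleaner.

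For \textbf{faithfulness}, I would recover a comorphism $f$ from the induced pseudogroup map. The key point is that a single morphism $a \colon fu \to v$ of $\mathbb{G}$ (for fixed $u$) can be realised as the value at $u$ of a partial bisection of $\mathbb{G}$ supported on the singleton $\{fu\}$ — namely the partial bisection $\hat a$ with $s(\hat a) = \{fu\}$, $t(\hat a) = \{v\}$, $\hat a_{fu} = a$. Since $\mathrm{PBis}(f)(\hat a)$ is then the singleton-supported partial bisection with $\bigl(\mathrm{PBis}(f)(\hat a)\bigr)_u = f(a)_u$, the map $\mathrm{PBis}(f)$ determines all the data $f(a)_u$, and the action on objects $f \colon \mathbb{H}_0 \to \mathbb{G}_0$ is recovered as $fu = $ the unique element of $s(\hat a)$ for any $a$ with domain $fu$ — more carefully, $fu$ is determined because $u \in s(\mathrm{PBis}(f)(\alpha))$ iff $fu \in s(\alpha)$, and $\mathbb{G}_0$ is (identified with) the poset of idempotents / the source-sets, so $f$ on objects is the inverse image map induced on idempotent subsets, which determines $f$ pointwise. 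Hence $\mathrm{PBis}(f) = \mathrm{PBis}(f')$ forces $f = f'$.

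For \textbf{fullness}, given a pseudogroup homomorphism $\phi \colon \mathrm{PBis}(\mathbb{G}) \to \mathrm{PBis}(\mathbb{H})$, I would reconstruct a comorphism $f$ with $\mathrm{PBis}(f) = \phi$. The idempotents of $\mathrm{PBis}(\mathbb{G})$ are exactly the identity partial bisections $e_A$ on subsets $A \subseteq \mathbb{G}_0$, so $\phi$ restricts to a join-preserving map on the idempotent semilattices, i.e.\ a frame-type map $\mathcal{P}(\mathbb{G}_0) \to \mathcal{P}(\mathbb{H}_0)$ sending singletons to (at most) singletons — here one uses that $e_{\{v\}}$ is an atom below which $\phi$ of it must lie, together with the ``non-degeneracy'' forced by $\phi(\mathbf 1) = \mathbf 1$ — yielding a function $f \colon \mathbb{H}_0 \to \mathbb{G}_0$ with $\phi(e_A) = e_{f^{-1}(A)}$. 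Then for $a \colon fu \to v$ in $\mathbb{G}$, form the singleton partial bisection $\hat a$ as above; $\phi(\hat a)$ is a partial bisection of $\mathbb{H}$ with source $f^{-1}(\{fu\}) \ni u$, and I would define $f(a)_u := \phi(\hat a)_u$. Checking that this data satisfies comorphism axioms (i)--(iii) reduces to the identities $\hat a^\ast \cdot \hat a = e_{\{fu\}}$, $\widehat{1_{fu}} = e_{\{fu\}}$, and $\hat b \cdot \hat a = \widehat{ba}$, each of which $\phi$ preserves; and verifying $\mathrm{PBis}(f) = \phi$ then follows because both sides agree on all singleton-supported partial bisections and on all idempotents, and every partial bisection is a join of compatible singleton-supported ones, so join-preservation of both $\phi$ and $\mathrm{PBis}(f)$ finishes it.

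The main obstacle I expect is \textbf{fullness}, specifically the reconstruction of the object-map $f \colon \mathbb{H}_0 \to \mathbb{G}_0$ and the proof that the reconstructed $f(a)_u$ genuinely satisfy the comorphism axioms globally rather than just pointwise. One has to rule out pathological behaviour of $\phi$ on the idempotent semilattice (e.g.\ $\phi(e_{\{v\}})$ being empty or larger than a singleton) using only that $\phi$ preserves the monoid unit and compatible joins; the cleanest route is probably to observe that $\{e_{\{v\}} : v \in \mathbb{G}_0\}$ is the set of atoms of the Boolean algebra of idempotents and that $\bigvee_v e_{\{v\}} = \mathbf 1$ is preserved, forcing $\phi$ to carry atoms to atoms bijectively onto the atoms in its image-frame and hence to be the inverse-image map of a well-defined function on objects. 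Once the object-map is under control, the rest is a sequence of routine inverse-semigroup identities, which — in keeping with the paper's style — I would only sketch.
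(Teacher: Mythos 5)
Your proposal follows essentially the same route as the paper: the same pushforward formula for partial bisections, faithfulness via the singleton-supported partial bisections $[a]$, and fullness by reconstructing the object-map from the idempotents and the arrow-data from $\varphi([a])_u$. One clause in your fullness argument is wrong, however: a pseudogroup homomorphism $\varphi$ does \emph{not} send singleton idempotents to ``at most singletons'', nor atoms to atoms. For instance, for the comorphism from a one-object discrete groupoid $\mathbb{G}$ to a two-object discrete groupoid $\mathbb{H}$ (whose object-map is the constant function $\mathbb{H}_0 \to \mathbb{G}_0$), the induced $\varphi$ sends the atom $[1_\ast]$ to the top element $\mathbf{1}$ of $\mathrm{PBis}(\mathbb{H})$. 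What is true, and what the paper's proof uses, is that the supports $s\bigl(\varphi([1_v])\bigr)$ for $v \in \mathbb{G}_0$ form a \emph{partition} of $\mathbb{H}_0$: they are pairwise disjoint because $[1_u]\cdot[1_v] = \bot$ for $u \neq v$ and $\varphi$ preserves products and the empty join, and they cover $\mathbb{H}_0$ because $\bigvee_v [1_v] = \mathbf{1}$ is preserved. This partition is exactly what defines the function $f \colon \mathbb{H}_0 \to \mathbb{G}_0$ (note the direction of the object-map), and your closing remark about $\varphi$ being ``the inverse-image map of a well-defined function on objects'' is the correct conclusion once derived this way rather than via atoms-to-atoms. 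Everything else---deriving the comorphism axioms from $[b]\cdot[a] = [b\circ a]$ and the preservation of these identities by $\varphi$, and concluding $\mathrm{PBis}(f) = \varphi$ because every partial bisection is a compatible join of singleton ones and both maps preserve such joins---matches the paper's argument.
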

\begin{proof}
  Let $f \colon \mathbb{G} \rightsquigarrow \mathbb{H}$ be a comorphism
  of groupoids, and $\alpha \in \mathrm{PBis}(\mathbb{G})$.
  Generalising Definition~\ref{def:2}, we can define a pushforward
  partial bisection $f\alpha \in \mathrm{PBis}(\mathbb{H})$ by taking
  $s(f\alpha)$ and $t(f\alpha)$ to be the inverse images of
  $s(\alpha)$ and $t(\alpha)$ under the function
  $f \colon \mathbb{H}_0 \rightarrow \mathbb{G}_0$, and with
  components given like before by
  \begin{equation*}
    (f\alpha)_u = f(\alpha_{fu})_u \colon u \rightarrow
    \tilde f(\alpha_{fu},u)\rlap{ .}
  \end{equation*}
  Straightforward checking shows that the assignation
  $\alpha \mapsto f\alpha$ is a pseudogroup morphism
  $\mathrm{PBis}(f) \colon \mathrm{PBis}(\mathbb{G}) \rightarrow
  \mathrm{PBis}(\mathbb{H})$ and that the assignation
  $f \mapsto \mathrm{PBis}(f)$ is functorial; so we have a functor
  $\mathrm{PBis} \colon \cat{Grpd}_\mathrm{co} \rightarrow \cat{Ps\G
    rp}$.

  To see this functor is faithful, note that we can recover the action
  on objects of $f \colon \mathbb{G} \rightsquigarrow \mathbb{H}$ from
  $\varphi \defeq \mathrm{PBis}(f) \colon \mathrm{PBis}(\mathbb{G}) \rightarrow
  \mathrm{PBis}(\mathbb{H})$ by the formula
  \begin{equation}\label{eq:19}
    f(v) = u \qquad \text{iff} \qquad v \in s\bigl(\varphi([1_u])\bigr)\rlap{ ;}
  \end{equation}
  here, if $a \colon u \rightarrow v$ is any map of $\mathbb{G}$ then
  we write $[a]$ for the partial bisection whose sole component is the
  map $a$. In a similar way, we can recover the action of the
  comorphism $f$ on maps by the formula
  \begin{equation}\label{eq:20}
    f(a)_u = b \qquad \text{iff} \qquad \varphi([a])_u = b\rlap{ .}
  \end{equation}

  It remains only to show that $\mathrm{PBis}$ is full. So let
  $\varphi \colon \mathrm{PBis}(\mathbb{G}) \rightarrow
  \mathrm{PBis}(\mathbb{H})$ be any pseudogroup morphism. In
  $\mathrm{PBis}(\mathbb{G})$ we have that
  \begin{equation*}
    \mathbf{1} = \textstyle\bigvee_{u \in \mathbb{G}_0} [1_u] \qquad \text{and}
    \qquad [1_u] \cdot [1_v] = \bot \text{ for $u \neq v$}\rlap{ ;}
  \end{equation*}
  since $\varphi$ is a pseudogroup morphism, it follows that in
  $\mathrm{PBis}(\mathbb{H})$ we have
  \begin{equation*}
    \mathbf{1} = \textstyle\bigvee_{u \in \mathbb{G}_0} \varphi([1_u]) \qquad \text{and}
    \qquad \varphi([1_u]) \cdot \varphi([1_v]) = \bot \text{ for $u \neq v$}\rlap{ ,}
  \end{equation*}
  so that the sets $s\bigl(\varphi([1_u])\bigr)$ are a partition of
  $\mathbb{H}_0$. We thus have a well-defined function
  $f \colon \mathbb{H}_0 \rightarrow \mathbb{G}_0$ determined
  by~\eqref{eq:19}; whereupon we obtain the assignations on morphisms
  required for a comorphism
  $f \colon \mathbb{G} \rightsquigarrow \mathbb{H}$ by the
  formula~\eqref{eq:20}. The comorphism axioms now follow easily from
  the homomorphism axioms for $\varphi$ together with the observation
  that $[a] \cdot [b] = [a \circ b]$ in $\mathrm{PBis}(\mathbb{G})$
  whenever $a$ and $b$ are composable maps. Finally, to see that
  $\mathrm{PBis}(f) = \varphi$, we observe that $\mathrm{PBis}(f)([a])
  = \varphi([a])$ by construction; now since for any $\alpha \in \mathrm{PBis}(\mathbb{G})$,
  we have $\alpha = \bigvee_{u \in s(\alpha)} [\alpha_u]$, and since
  both $\mathrm{PBis}(f)$ and $\varphi$ preserve joins, it follows that $\mathrm{PBis}(f)(\alpha) =
  \varphi(\alpha)$ for all $\alpha \in \mathrm{PBis}(\mathbb{G})$, as desired.
\end{proof}

It is not too hard to characterise the essential image of the
embedding functor $\cat{Grpd}_\mathrm{co} \to \cat{Ps\G rp}$; it
comprises the \emph{complete atomic} pseudogroups---those whose
partially ordered set of idempotents forms a complete atomic Boolean
algebra (i.e., a power-set lattice). Thus, our main result, concerning
the ``non-algebraic'' category of groupoids and comorphisms, can be
recast as one about the ``algebraic'' category of complete atomic
pseudogroups and pseudogroup homomorphisms; and
following~\cite{Lawson2013Pseudogroups}, we may recast the
generalisation of our main result to \'etale topological groupoids in
terms of more general pseudogroups.

There are a couple of points worth noting here. Firstly, when
translated into the language of pseudogroups, our main result states
that every extended inner automorphism of a complete atomic
pseudogroup $M$ is induced by conjugation (in the usual sense) by an
invertible element of the monoid $M$---indeed, such invertible
elements correspond to \emph{total} bisections of the corresponding
groupoid. So in this sense, our result fits into the pattern
established in~\cite{Bergman2012An-inner, Hofstra2018Isotropy}.

On the other hand, if we translate the proof of our main
Theorem~\ref{thm:1} into the language of complete atomic pseudogroups,
then it is still not a proof in the same mould
as~\cite{Bergman2012An-inner, Hofstra2018Isotropy}. If it were, then
the first step in determining the components of an extended inner
automorphism $\beta$ of $M$ would be to adjoin freely a new element
$x$ to $M$ and consider the component
$\beta_{\iota} \colon M[x] \rightarrow M[x]$ at the resulting
inclusion map $\iota \colon M \rightarrow M[x]$. This is quite
different from what is done in Proposition~\ref{prop:3}: in the
language of pseudogroups, the first step there is to consider an
atomic idempotent $u \in M$, and now consider the component
$\beta_\jmath$ corresponding to the homomorphism
\begin{align*}
  \jmath \colon M &\rightarrow \mathrm{PBij}(M_u)\\
  x & \mapsto x \cdot (\thg)
\end{align*}
into the pseudogroup of all partial bijections of
$M_u = \{m \in M : m^\ast m =
u\}$. It may be interesting to compare these approaches more
thoroughly, but would take us too far afield here.

\subsection{Extended partial inner automorphisms}
\label{sec:extend-part-inner}

In a pseudogroup $M$, \emph{any} element
$a \in M$ induces a conjugation map $c_a(x) = axa^\ast$. However,
$c_a \colon M \rightarrow M$ is not typically an automorphism of $M$,
nor even a well-defined homomorphism, since it does not preserve the
monoid unit $1$ unless $a \in M$ is genuinely invertible.

Nonetheless, we would like to think of $c_a$ as a \emph{partial}
automorphism of $M$, hoping for a result to the effect that every
extended inner partial automorphism of a pseudogroup is induced by
conjugation by an element. In the world of groupoids this would
translate into the statement that every extended partial automorphism
in $\cat{Grpd}_\mathrm{co}$ comes from conjugating by a partial
bisection.

Making this precise is delicate because, just as conjugation on a
pseudogroup does not give a pseudogroup homomorphism, so conjugation
on a groupoid by a partial bisection does not give a comorphism. Thus,
much as in Section~6 of~\cite{Bergman2012An-inner}, we must proceed in
an essentially \emph{ad hoc} manner.

\begin{Defn}
  \label{def:8}
  A \emph{partial automorphism} $\varphi \colon \mathbb{G} \tor
  \mathbb{G}$ of a groupoid $\mathbb{G}$ is given by full
  subcategories $s(\varphi), t(\varphi) \subseteq \mathbb{G}$ together
  with an isomorphism of groupoids $\varphi \colon s(\varphi)
  \rightarrow t(\varphi)$. Given a comorphism $f \colon \mathbb{G}
  \rightsquigarrow \mathbb{H}$ and partial automorphisms $\varphi
  \colon \mathbb{G} \tor \mathbb{G}$ and $\psi \colon
  \mathbb{H} \tor \mathbb{H}$, we say that
  \begin{equation}\label{eq:21}
    \cd{
      {\mathbb{G}} \ar[r]^-{\varphi}|-{+} \ar@{~>}[d]_{f} &
      {\mathbb{G}} \ar@{~>}[d]^{f} \\
      {\mathbb{H}} \ar[r]_-{\psi}|-{+}  &
      {\mathbb{H}}
    }
  \end{equation}
  is a \emph{commuting square} if:
  \begin{enumerate}[(i),itemsep=0.25\baselineskip]
  \item On objects, we have $u \in s(\psi)$ if and only if
    $f(u) \in s(\varphi)$; and for those $u$ where this does hold, we
    have that $\varphi(f(u)) = f(\psi(u))$ in $t(\varphi)$.
  \item For all $a \colon fu \rightarrow v$ in $s(\varphi)$, we have
    $\psi(f(a)_u) = f(\varphi(a))_{\psi(u)}$ in $t(\psi)$.
  \end{enumerate}
  
  Now by an \emph{extended inner partial automorphism} of $\mathbb{G}$, we
  mean a family of partial automorphisms $(\beta_f \colon \mathbb{H}
  \tor \mathbb{H})$, one for each comorphism $f \colon
  \mathbb{G} \rightsquigarrow \mathbb{H}$, such that for all comorphisms $f
  \colon \mathbb{G} \rightsquigarrow \mathbb{H}$ and $g \colon \mathbb{H}
  \rightsquigarrow \mathbb{K}$ we have a commuting square:
  \begin{equation*}
    \cd{
      {\mathbb{H}} \ar[r]^-{\beta_f}|-{+} \ar@{~>}[d]_{g} &
      {\mathbb{H}} \ar@{~>}[d]^{g} \\
      {\mathbb{K}} \ar[r]_-{\beta_{gf}}|-{+}  &
      {\mathbb{K}}\rlap{ .}
    }
  \end{equation*}
\end{Defn}
Using the fact that commuting squares of the form~\eqref{eq:21} stack
vertically and horizontally to give commuting squares, we can now
follow through the same argument as before, \emph{mutatis mutandis},
to show that:
\begin{Thm}
  The monoid of extended partial inner automorphisms of a groupoid
  $\mathbb{G}$ is isomorphic to the monoid of partial bisections
  $\mathrm{PBis}(\mathbb{G})$. The extended inner automorphism
  corresponding to $\alpha \in \mathrm{PBis}(\mathbb{G})$ is given by
  the family of partial automorphisms
  $\bigl(c_{f\alpha} \colon \mathbb{H} \rightsquigarrow
  \mathbb{H}\bigr)$ as $f$ ranges over comorphisms
  $\mathbb{G} \rightsquigarrow \mathbb{H}$.
\end{Thm}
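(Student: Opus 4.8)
The plan is to run the argument of Theorem~\ref{thm:1} again, systematically replacing total bisections and automorphisms by partial ones and keeping careful track of the source and target subsets; the underlying geometry is unchanged, and the only new content is this bookkeeping of partiality, which works because commuting squares of the form~\eqref{eq:21} stack horizontally and vertically.

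\emph{The easy direction.} First I would generalise Proposition~\ref{prop:2}. Given a partial bisection $\alpha$ of $\mathbb{G}$ and a comorphism $f \colon \mathbb{G} \rightsquigarrow \mathbb{H}$, the pushforward $f\alpha \in \mathrm{PBis}(\mathbb{H})$ constructed in the proof of Proposition~\ref{prop:6} has $s(f\alpha)=f^{-1}(s(\alpha))$ and $t(f\alpha)=f^{-1}(t(\alpha))$, and hence induces a partial automorphism $c_{f\alpha}\colon\mathbb{H}\tor\mathbb{H}$ whose domain is the full subcategory of $\mathbb{H}$ on $s(f\alpha)$. I must check that, for $f\colon\mathbb{G}\rightsquigarrow\mathbb{H}$ and $g\colon\mathbb{H}\rightsquigarrow\mathbb{K}$, the square~\eqref{eq:21} with $\beta_f=c_{f\alpha}$ and $\beta_{gf}=c_{(gf)\alpha}=c_{g(f\alpha)}$ is a commuting square in the sense of Definition~\ref{def:8}. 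Since the pushforward is functorial (Proposition~\ref{prop:6}), I may as before take $\mathbb{H}=\mathbb{G}$ and $f=1$, and then decompose $g$ as $h_\ast$ or $h^\ast$ via Proposition~\ref{prop:5}. Condition~(i) of Definition~\ref{def:8} is then immediate ($u\in s(c_{g\alpha})$ iff $gu\in s(\alpha)$ iff $g(u)\in s(c_\alpha)$, and the agreement of object-maps is comorphism axiom~(i) for $g$), while condition~(ii) is exactly the conjugation-compatibility identity already verified in Proposition~\ref{prop:2}, using~\eqref{eq:7} and~\eqref{eq:15}.

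\emph{The hard direction.} Next I would redo Proposition~\ref{prop:3}. Let $\beta=(\beta_f\colon\mathbb{H}\tor\mathbb{H})$ be an extended inner partial automorphism of $\mathbb{G}$. For each $u\in\mathbb{G}_0$ form the coslice $u/\mathbb{G}$ with its discrete opfibration $\pi_u$, giving the partial automorphism $\beta_{\pi_u^\ast}\colon u/\mathbb{G}\tor u/\mathbb{G}$; put $s(\alpha)=\{u\in\mathbb{G}_0 : 1_u\in s(\beta_{\pi_u^\ast})\}$, and for $u\in s(\alpha)$ set $\alpha_u=\beta_{\pi_u^\ast}(1_u)\colon u\to\tilde\alpha(u)$. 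The commuting square relating $\beta_{\pi_u^\ast}$ and $\beta_{1_\mathbb{G}}$ (the analogue of~\eqref{eq:12}) gives, via condition~(i), that $1_u\in s(\beta_{\pi_u^\ast})$ iff $u\in s(\beta_{1_\mathbb{G}})$, so that $s(\alpha)$ is the object-set of the full subcategory $s(\beta_{1_\mathbb{G}})$; and tracing $1_u$ around the square gives $\tilde\alpha(u)=\beta_{1_\mathbb{G}}(u)$, so $\tilde\alpha$ is the invertible object-action of $\beta_{1_\mathbb{G}}$ and $\alpha$ is a partial bisection (with $t(\alpha)$ the object-set of $t(\beta_{1_\mathbb{G}})$). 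The same precomposition-on-coslices computation as in Proposition~\ref{prop:3}, now applied to maps $a\colon u\to v$ with $u,v\in s(\alpha)$, yields $\beta_{1_\mathbb{G}}(a)=\alpha_v\circ a\circ\alpha_u^{-1}$; since $s(\beta_{1_\mathbb{G}})$ and $t(\beta_{1_\mathbb{G}})$ are full subcategories, this says exactly $\beta_{1_\mathbb{G}}=c_\alpha$. Finally, Steps~1--3 of Proposition~\ref{prop:3} go through essentially verbatim, with one added observation at each stage: condition~(i) of the relevant commuting square forces the source subsets to transport correctly ($s(g^\ast\alpha)=g^{-1}(s(\alpha))$ in Step~1, $s(h_\ast\alpha)=h(s(\alpha))$ in Step~2), after which the traced object lies in the appropriate domain and the identities~\eqref{eq:7} and~\eqref{eq:15} give $\beta_f=c_{f\alpha}$ for every comorphism $f$.

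\emph{Monoid structure, uniqueness, and the main obstacle.} Uniqueness of $\alpha$ is forced, since $s(\alpha)$ and the morphisms $\alpha_u$ are read off from $\beta_{1_\mathbb{G}}$. The families of partial automorphisms form a monoid under componentwise composition, and one checks as in Theorem~\ref{thm:1} that this matches the multiplication of Example~\ref{ex:1}, i.e.\ $c_\beta\circ c_\alpha=c_{\beta\cdot\alpha}$: the domains agree because $s(c_\beta\circ c_\alpha)=s(\alpha)\cap\tilde\alpha^{-1}(s(\beta))=s(\beta\cdot\alpha)$, and the components agree by the evident formula. Hence $\alpha\mapsto(c_{f\alpha})_f$ is a monoid isomorphism onto the monoid of extended inner partial automorphisms of $\mathbb{G}$. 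I expect the only real obstacle to be disciplined bookkeeping: one must verify that commuting squares of the form~\eqref{eq:21} genuinely stack both ways (this is what licenses the reductions and the ``trace $1_u$ around the square'' arguments in the partial setting), and confirm at each use that the object being traced actually lies in the domain of the partial automorphism concerned---which is guaranteed by condition~(i), but must be checked rather than assumed.
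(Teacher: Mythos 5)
Your proposal is correct and follows precisely the route the paper intends: the paper offers no details for this theorem beyond the remark that commuting squares of the form~\eqref{eq:21} stack horizontally and vertically, leaving the \emph{mutatis mutandis} adaptation of Propositions~\ref{prop:2} and~\ref{prop:3} to the reader, and your reconstruction supplies exactly that adaptation with the right bookkeeping of the subsets $s({-})$ and $t({-})$. One small imprecision: uniqueness of $\alpha$ is read off from the components $\beta_{\pi_u^\ast}(1_u)$ rather than from $\beta_{1_{\mathbb{G}}}$ alone (which determines $c_\alpha$ but not $\alpha$ --- consider a group with non-trivial centre), though your construction of $\alpha_u$ already does this correctly, so the argument stands.
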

The details are sufficiently similar that we leave them to the
interested reader to reconstruct.

\bibliographystyle{acm} \bibliography{bibdata}

% This leads  Bergman to make the following general definition: an
% \emph{inner automorphism} of an object $X$ of a category $\C$ is
% given by the functorial choice, for each $f \colon X \rightarrow Y$ in
% $\C$, of an automorphism $\beta_f \colon Y \rightarrow Y$, subject to
% the functoriality condition that the square

% the requisite coherence states
% that, for any pair of homomorphisms $f \colon G \rightarrow H$ and $g
% \colon H \rightarrow K$, we have that
% \begin{equation}\label{eq:1}
%   \beta_{gf} \cdot g = g \cdot \beta_f \colon H \rightarrow K
% \end{equation}
% 
% 
% 
% 
% 
% The coherence of this choice may
% expressed by saying that the $\beta_f$ constitute the components of a
% natural automorphism of the forgetful functor $G /
% \cat{Grp} \rightarrow \cat{Grp}$ from the category of groups under $G$
% to the category of groups. Motivated by this, Bergman makes the
% following general definition: the
% \emph{group of inner automorphisms} of an object $X$ of a category
% $\C$ is the group of automorphisms of the forgetful functor $U \colon X / \C
% \rightarrow \C$.

\end{document}